\documentclass[a4paper,reqno,11pt]{amsart}

\usepackage[T1]{fontenc}
\usepackage[utf8]{inputenc}
\usepackage{xspace,
	amsfonts}
\usepackage{amsmath}
\usepackage{amssymb}
\usepackage{graphicx,
	bbm,
	tikz,
	subfig}
\usepackage{xfrac}
\usepackage{dsfont}
\usepackage{amsthm}
\usepackage{mathtools}
\usepackage{enumitem}
\usepackage{verbatim}
\usepackage[autostyle]{csquotes}

\usepackage{cite}

\mathtoolsset{showonlyrefs}

\usepackage{geometry}
\newtheorem{theorem}{Theorem}[section]
\newtheorem{lemma}[theorem]{Lemma}
\newtheorem{proposition}[theorem]{Proposition}

\theoremstyle{remark}
\newtheorem{remark}[theorem]{Remark}
\theoremstyle{definition}

\newcommand{\R}{\mathbb{R}}

\newcommand{\N}{\mathbb{N}}

\newcommand{\M}{\mathcal{M}}

\newcommand{\Z}{\mathbb{Z}}

\tikzstyle{nodino}=[circle,draw,fill,inner sep=0pt,minimum size=0.5mm]
\tikzstyle{infinito}=[circle,inner sep=0pt,minimum size=0mm]
\tikzstyle{nodo}=[circle,draw,fill,inner sep=0pt, minimum size=0.5*width("k")]
\tikzstyle{nodo_vuoto}=[circle,draw,inner sep=0pt, minimum size=0.5*width("k")]
\usetikzlibrary{graphs}
\tikzset{every loop/.style={min distance=10mm,in=300,out=240,looseness=10}}
\tikzset{place/.style={circle,thick,draw=blue!75,fill=blue!20,minimum
		size=6mm}}
\tikzset{place2/.style={circle,thick,draw=red!75,fill=red!20,minimum
		size=6mm}}

\title[]{Existence of optimizers for the sharp stability constant in the logarithmic Sobolev inequality}

\author[ ]{Simone Dovetta}
\address[S. Dovetta]{Politecnico di Torino, Dipartimento di Scienze Matematiche ``G.L. Lagrange'', Corso Duca degli Abruzzi 24, 10129, Torino, Italy.}
\email{simone.dovetta@polito.it}

\author[ ]{Enrico Serra}
\address[E. Serra]{Politecnico di Torino, Dipartimento di Scienze Matematiche ``G.L. Lagrange'', Corso Duca degli Abruzzi 24, 10129, Torino, Italy.}
\email{enrico.serra@polito.it}

\begin{document}
	
	\begin{abstract}
		We study the sharp stability constant in the logarithmic Sobolev inequality, defined as the
		infimum of the logarithmic Sobolev deficit divided by the squared $L^2$-distance from the
		manifold of Gaussian optimizers. For every $N\geq 1$, first we prove that this infimum is attained
		in the Euclidean formulation of the inequality. Then, we show that every such optimizer
		decays exponentially fast at infinity, and therefore gives rise to an optimizer in the Gaussian
		formulation, where the same sharp constant appears. The proof is based on an extension of the general strategy introduced by Bianchi and Egnell for the Sobolev inequality and recently developed by K\"onig
		to prove the existence of optimizers for the corresponding sharp stability constant. The main
		difference in the logarithmic Sobolev setting is the analysis of minimizing sequences approaching the manifold of optimizers, since the entropy term $u^2\log u^2$ does not allow for a direct global second-order expansion. This is overcome by a perturbative estimate near the Gaussian manifold, which provides the compactness threshold needed to rule out the loss of compactness.
	\end{abstract}
	
	\maketitle

	\vspace{-.5cm}
	\noindent {\footnotesize {AMS Subject Classification:} 46E35, 49J40, 26D10, 35A15}
	
	\noindent {\footnotesize {Keywords:} logarithmic Sobolev inequality, quantitative stability, sharp constants, optimizers.}
	
	\section{Introduction}
	Let $\gamma:\R^N\to\R$ be the Gaussian $\gamma(x):=e^{-\frac\pi2|x|^2}$ and let $d\gamma=\gamma(x)^2dx$. The Gaussian logarithmic Sobolev inequality with sharp constant is
	\begin{equation}
		\label{LSI}
		\int_{\R^N}|\nabla v|^2\,d\gamma\geq\pi\int_{\R^N}v^2\log\frac{v^2}{\|v\|_{L^2(\R^N,d\gamma)}^2}\,d\gamma\qquad\forall v\in H^1(\R^N,d\gamma)\,,
	\end{equation}
	where equality is attained only by the set of optimizers 
	\[
	\M_\gamma:=\left\{a e^{b\cdot x}\,:\,a\in\R,\,b\in\R^N\right\}.
	\]
	The analogous Euclidean logarithmic Sobolev inequality can be written for instance as 
	\begin{equation}
		\label{ELSI}
		\int_{\R^N}|\nabla u|^2\,dx-N\pi\int_{\R^N}|u|^2\,dx\geq\pi\int_{\R^N}u^2\log\frac{u^2}{\|u\|_{L^2(\R^N)}^2}\,dx\qquad\forall u\in H^1(\R^N)
	\end{equation}
	with corresponding set of optimizers
	\[
	\M:=\left\{ae^{-\frac\pi2|x-b|^2}\,:\,a\in\R,\,b\in\R^N\right\}.
	\]
	Inequality \eqref{LSI} is usually traced back to Gross \cite{gross}, whereas a scale invariant version of \eqref{ELSI} appeared in \cite{weissler} but was already known in one dimension since the late Fifties due to Stam \cite{stam}. As pointed out e.g. in \cite[p. 196]{carlen} and in \cite[p. 224-225]{LL}, inequality \eqref{ELSI} follows directly by \eqref{LSI}  for functions in the form $u=\gamma v$ with $v\in H^1(\R^N,\ d\gamma)$. However, since 
	\begin{equation}
	\label{H1>H1gauss}
	\left\{u=\gamma v\,:\,v\in H^1(\R^N,d\gamma)\right\}=\left\{u\in H^1(\R^N)\,:\,\int_{\R^N}|x|^2u(x)^2\,dx<\infty\right\}\subsetneqq H^1(\R^N),
	\end{equation}
	the validity of \eqref{ELSI} for $u\in H^1(\R^N)$ with$\displaystyle \int_{\R^N}|x|^2u(x)^2\,dx=\infty$ does not come from \eqref{LSI}, but can be seen e.g. by approximation with $u_n=\eta_n u$, for suitable cut-off functions $\eta_n$.
	
	The characterization of the set of optimizers in logarithmic Sobolev inequalities was derived by Carlen in \cite{carlen}. Since then, a natural attention has been devoted to stability results for these inequalities. In this context, introducing the Gaussian deficit
	 \[
	 \delta_\gamma[v]:=\int_{\R^N}|\nabla v|^2\,d\gamma-\pi\int_{\R^N}v^2\log\frac{v^2}{\|v\|_{L^2(\R^N,d\gamma)}^2}\,d\gamma
	 \]
	for $v\in H^1(\R^N,\ d\gamma)$, and the corresponding Euclidean one
	\[
	\delta[u]:=\int_{\R^N}|\nabla u|^2\,dx-	\pi\int_{\R^N}u^2\log\frac{u^2}{\|u\|_{L^2(\R^N)}^2}\,dx-N\pi\int_{\R^N}|u|^2\,dx
	\]
	for $u\in H^1(\R^N)$, stability has been pursued either by searching for improved lower bounds for $\delta_\gamma$ and $\delta$ on suitable  subsets of $H^1(\R^N,\ d\gamma)$ and $H^1(\R^N)$, or by trying to control the deficits from below on the whole space by some distance from the manifolds of optimizers $\M_\gamma$ and $\M$. For the first approach we refer e.g. to \cite{BGRS,BDS,CCN,DT,FIL,FIPR,H} and to the comprehensive review \cite{BDS_rev}. The second approach is in the spirit of the celebrated result by Bianchi and Egnell \cite{BE} for Sobolev inequalities, where, answering to a question raised by Brezis and Lieb in \cite{BL}, it is proved that for every $N\geq3$ there exists a constant $K_N>0$ such that
	\begin{equation}
		\label{SOB}
	\int_{\R^N}|\nabla u|^2\,dx-S_N\left(\int_{\R^N}|u|^{2^*}\right)^{\frac2{2^*}}\geq K_N\inf_{B\in\mathcal{S}}\|u-B\|_{\dot{H}^1(\R^N)}^2\qquad\forall u\in H^1(\R^N)\,,
	\end{equation}
	where $\displaystyle 2^*=\frac{2N}{N-2}$ is the critical exponent for the Sobolev embedding of $H^1(\R^N)$, $S_N$ is the corresponding optimal constant, and $\mathcal S$ is the set of Aubin-Talenti functions. In the case of logarithmic Sobolev inequalities, one of the first results in this direction is an improved inequality reported by Carlen in \cite{carlen}, while various results have been obtained when the distance from the manifold of optimizers is measured by Wasserstein distances (see e.g. \cite{ELS,IM,Indrei1} and references therein). When turning to stronger norms, in the remarkable paper \cite[Corollary 1.2]{DEFFL25} (see also \cite{DEFFLrev}) it has been recently proved that 
	\begin{equation}
		\label{eq:stabLSI}
		Q:=\inf_{v\in H^1(\R^N,d\gamma)\setminus\M_\gamma}Q_\gamma(v)=\inf_{v\in H^1(\R^N,d\gamma)\setminus\M_\gamma}\frac{\delta_\gamma[v]}{d_\gamma(v,\M_\gamma)^2}>0\,, 
	\end{equation}
	where the distance from the manifold $\M_\gamma$ is taken in $L^2(\R^N,d\gamma)$
	\[
	d_\gamma(v,\M_\gamma):=\inf_{w\in\M_\gamma}\|v-w\|_{L^2(\R^N,d\gamma)}\,.
	\]
	This result is obtained in \cite{DEFFL25} as the limit of the quantitative Sobolev inequalities \eqref{SOB} in $H^1(\R^d)$ when $d\to\infty$, while an alternative proof is provided in \cite{DEFFLrev}. Moreover, under no further assumptions, this stability result is somehow optimal, in the sense that it cannot be improved replacing the distance in $L^2(\R^N,\ d\gamma)$ with stronger norms in $H^1(\R^N,\ d\gamma)$ (see e.g. \cite{Indrei2, Kim, IK}). As for \eqref{ELSI}, the stability inequality \eqref{eq:stabLSI} extends to the Euclidean setting with the same optimal constant (see \cite[Corollary 4.4]{DEFFL25})
	\begin{equation}
		\label{eq.stabELSI}
		\inf_{u\in H^1(\R^N)\setminus\M}Q(u)=	\inf_{u\in H^1(\R^N)\setminus\M}\frac{\delta[u]}{d(u,\M)^2}=Q\,,
	\end{equation}
	where
	\[
	d(u,\M):=\inf_{w\in \M}\|u-w\|_{L^2(\R^N)}\,.
	\]
	A major open problem related to these stability inequalities is to determine, or even to understand qualitatively, the optimal constants. At present, very little seems to be known in this direction, both in the Sobolev case  \eqref{SOB} and in the logarithmic one \eqref{eq:stabLSI}--\eqref{eq.stabELSI}. To the best of our knowledge, the only available result in this sense is in fact the lower bound obtained in \cite{DEFFL25}, where an explicit dimension-independent constant $\beta>0$ is determined such that the best stability constant in the Sobolev inequality \eqref{SOB} satisfies
	\[
	K_N\geq\frac\beta N \qquad\forall N\geq 3\,,
	\]
	which in turn yields the lower bound
	\[
	Q\geq\frac{\pi\beta}2
	\]
	for the best constant in \eqref{eq:stabLSI}--\eqref{eq.stabELSI}. Observe that, even though the above lower bound for $Q$ is independent of the dimension $N$ of the ambient space, as far as we know to understand whether the best constant depends on $N$ is an open question.

	A first question, which is often crucial in the study of sharp stability constants, is whether the optimal constant is attained.  For Sobolev inequalities \eqref{SOB}, this question was recently answered in the affirmative by the notable paper \cite{Konig_JEMS}. In that work the author develops a thorough analysis of the possible loss of compactness for minimizing sequences of the quotient
	\begin{equation}
	\label{eq:quoSOB}
	\displaystyle\frac{\int_{\R^N}|\nabla u|^2\,dx-S_N\left(\int_{\R^N}|u|^{2^*}\right)^{\frac2{2^*}}}{\inf_{B\in\mathcal S}\|u-B\|_{\dot{H}^1(\R^N)}^2}
	\end{equation}
	in $H^1(\R^N)\setminus\mathcal S$, showing that this can happen a priori for two different reasons. On the one hand, the sequence may become closer and closer to the manifold of Aubin-Talenti bubbles $\mathcal S$; on the other hand, the sequence can split asymptotically into the superposition of two non-interacting bubbles, one of which running away at infinity. The existence of an optimizer is then obtained by deriving two distinct compactness thresholds corresponding to these two scenarios, and by constructing an explicit competitor whose level lies strictly below both of them.
	
	\smallskip
	The main purpose of this paper is to settle the corresponding problem for the logarithmic Sobolev stability inequalities \eqref{eq:stabLSI}--\eqref{eq.stabELSI}. Precisely, our main results are the following.
	
	\begin{theorem}
	\label{thm:main}
		For every $N\geq1$, one has $Q<2\pi\log2$. Moreover, there exists $u\in H^1(\R^N)$ such that $Q(u)=Q$. 
	\end{theorem}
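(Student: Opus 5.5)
We follow the Bianchi--Egnell--König strategy. The argument has three steps: identify the two compactness thresholds, build a competitor strictly below both, then show that a minimizing sequence for $Q(u)$ in $H^1(\R^N)\setminus\M$ converges.

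\textbf{Step 1: the two thresholds.} Both levels come from natural ways in which a minimizing sequence can lose compactness.

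First, consider sequences that split into two non-interacting Gaussians, $u_n=g+g(\cdot-b_n)$ with $g=e^{-\frac\pi2|x|^2}$ and $|b_n|\to\infty$.
\begin{itemize}
\item Each bubble has zero deficit, but the normalization $\|u_n\|_2^2\to 2\|g\|_2^2$ produces an entropy defect. Explicitly, $\delta[u_n]\to 2\pi\|g\|_2^2\log 2$.
\item The distance satisfies $d(u_n,\M)^2\to\|g\|_2^2$, since the best approximant is a single bubble.
\end{itemize}
Hence $Q(u_n)\to 2\pi\log 2$, and in particular $Q\le 2\pi\log2$. More generally, for a splitting into $\lambda g$ and $\mu g(\cdot-b_n)$, one expects a limit of the form
\[
\frac{\pi\bigl(\lambda^2\log\frac{\lambda^2+\mu^2}{\lambda^2}+\mu^2\log\frac{\lambda^2+\mu^2}{\mu^2}\bigr)}{\min(\lambda,\mu)^2},
\]
whose infimum should again be $2\pi\log 2$, attained at $\lambda=\mu$. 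One should check this elementary fact, and likewise that adding nonbubble remainders does not lower the level.

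Second, consider sequences approaching $\M$, i.e.\ $u_n=g+\varepsilon_n\rho_n$ with $\rho_n\perp T_g\M$ and $\|\rho_n\|_2=1$. A second-order expansion gives
\[
Q(u_n)\gtrsim \text{Hessian quotient}+o(1).
\]
The Hessian corresponds to the Ornstein--Uhlenbeck operator: after the substitution $\rho=\gamma\phi$, its quotient is $\int|\nabla\phi|^2\,d\gamma-2\pi\int\phi^2\,d\gamma$ over $\int\phi^2\,d\gamma$. Its eigenvalues are $2\pi k$, and the orthogonality to $T_g\M$ excludes $k=0,1$. The first admissible eigenvalue is therefore $k=2$, giving the threshold $4\pi-2\pi=2\pi$.

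\textbf{Step 2: the strict inequality.} Since $2\pi>2\pi\log2$, it is enough to produce a competitor below $2\pi\log 2$. I would take $u=g+g(\cdot-b)$ at a large but finite separation, or a slightly perturbed version of it, and expand in the overlap $e^{-\frac\pi4|b|^2}$. The interaction term in the deficit should lower the value, because of the cross term $2\int\nabla g\cdot\nabla g(\cdot-b)$ compared with the entropy. At the same time, the distance to $\M$ is nearly unchanged to leading order. This computation is delicate: it requires explicit Gaussian integrals together with the behaviour of $x\log x$ in the overlap region. If this pair fails, one can instead use asymmetric weights $\lambda\ne\mu$ or a dilation perturbation.

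\textbf{Step 3: compactness.} Take a minimizing sequence, normalize $\|u_n\|_2=1$, and translate so that its nearest bubble is centered at $0$. Several scenarios must be ruled out.
\begin{itemize}
\item \emph{Distance to $\M$ tending to zero.} This is where the main obstacle lies. Because $u^2\log u^2$ is not $C^2$ near $u=0$ and has no uniform global Taylor expansion, I would split $\R^N$ into the region where $|\rho|\lesssim g$ and its complement. On the first region, Taylor expansion gives the Hessian quotient up to $o(\varepsilon^2)$. On the second, I would use the elementary inequality $(a+b)^2\log(a+b)^2\le\dots$ to show that this region contributes nonnegatively, up to errors of size $o(\varepsilon_n^2)$. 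The conclusion should be $\liminf Q(u_n)\ge 2\pi$, which contradicts Step 2.
\item \emph{Splitting.} Concentration-compactness on $|u_n|^2$ excludes vanishing, since vanishing forces $\delta\to\infty$ relative to the distance. Dichotomy would produce a level $\ge 2\pi\log2$, again contradicting Step 2.
\end{itemize}
Once these are excluded, $u_n\to u$ strongly in $L^2$. The distance then passes to the limit, and weak lower semicontinuity of the gradient term together with the convergence of the entropy (tightness plus $L^2$ convergence) shows that $u$ is a minimizer.
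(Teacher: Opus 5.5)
Your outline follows the paper's own strategy, and both thresholds are identified correctly. These are $2\pi$ from the spectral gap on $(T_\gamma\M)^\perp$, and $2\pi\log2=F(1,1)$ for two separated Gaussians, with $F(x,y)=\pi\bigl((x+y)\log(x+y)-x\log x-y\log y\bigr)$. However, the steps that carry the proof are either not done or rest on heuristics that fail.

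First, the strict inequality $Q<2\pi\log2$, which is half of the statement, is only asserted. Take $u_a=\gamma+\gamma(\cdot-ae_1)$ and $q=e^{-\pi a^2/4}$. The gradient cross term $\pi q-\frac{\pi^2a^2}{2}q$ that you invoke is cancelled exactly, at order $a^2q$, by the entropy term, so it does not decide the sign. What decides it is a residual $-2\pi aq$ in $\delta[u_a]$. The paper extracts it from the entropy by centering $u_a$ as $2e^{-\frac\pi8a^2}e^{-\frac\pi2x^2}\cosh(\pi ax/2)$ and using $\log\cosh z\ge|z|-\log2$ together with erfc asymptotics. This residual must beat $O(q)$ errors, while $d(u_a,\M)^2=1-\frac\pi2a^2q^2+o(a^2q^2)$ is unchanged at order $q$. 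Your fallback with weights $\lambda\neq\mu$ does not help at large separation, since by your own formula such configurations lie strictly above $2\pi\log2$.

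Second, the compactness step has two gaps, one near $\M$ and one for dichotomy.

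Near $\M$, your two bounds do not yield $\liminf Q(u_n)\ge2\pi$. On the good set the Taylor error is $O(\rho_n^2)$ wherever $|\varepsilon_n\rho_n|$ is comparable to $\gamma$. Its integral is therefore small only on bounded sets, unless $(\rho_n^2)_n$ is tight. On the bad set $\{|\varepsilon_n\rho_n|>\gamma/2\}$, the remainder $-R_n/\varepsilon_n^2$ is only bounded below by $\rho_n^2(2|\log\varepsilon_n|-2\log_+|\rho_n|-C)$, not by the Hessian density $\rho_n^2(\pi|x|^2-3)$. So the spectral gap cannot be applied to $\rho_n$ itself.

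The missing ingredient is compactness of $(\rho_n)_n$, as in Lemma~\ref{lem:boundBE}. The $|\log\varepsilon_n|$ weight, with Gagliardo--Nirenberg absorbing $\rho_n^2\log_+|\rho_n|$, gives an $H^1$ bound and forces the mass on the bad set to vanish. The $|x|^2$ weight on the good set gives tightness. One then applies $\|\nabla\phi\|_2^2+\pi^2\int|x|^2\phi^2\,dx\ge\pi(N+4)$ to the strong $L^2$ limit $\phi$.

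For dichotomy, your claim that non-bubble remainders do not lower the level is false. For any $w\notin\M$, $Q(w+t\gamma(\cdot-b))\to Q(w)$ as $|b|\to\infty$ and then $t\to0$, so split configurations reach levels below $2\pi\log2$. Splitting of a \emph{minimizing} sequence is excluded only through minimality, which needs three ingredients:
\begin{itemize}
\item a Brezis--Lieb splitting of $\int u_n^2\log u_n^2$, which is not standard for $s^2\log s^2$ (see Remark~\ref{rem:BL});
\item the asymptotics $m(u_n)=\max\{m(u),m(u_n-u)\}+o(1)$ for the distance;
\item the bound $\delta\ge Q\,d(\cdot,\M)^2$ applied to each piece, together with the monotonicity of $y\mapsto F(x,y)/y$, which gives $Q\ge F(x,x)/x=2\pi\log2$.
\end{itemize}
The paper, following K\"onig, first shows $m(u_n-u)=m(u)+o(1)$ via the rescaled competitor $c_nu+(u_n-u)$.
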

	\begin{theorem}
		\label{thm:main2}
		For every $N\geq1$, let $u\in H^1(\R^N)$ be as in Theorem \ref{thm:main}. Then $u$ decays exponentially fast as $|x|\to\infty$. In particular, $v:=u/\gamma\in H^1(\R^N,d\gamma)$ and $Q_\gamma(v)=Q$. 
	\end{theorem}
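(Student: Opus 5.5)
Let $u\in H^1(\R^N)\setminus\M$ be an optimizer given by Theorem \ref{thm:main}, i.e. $Q(u)=Q$. The plan is to derive an Euler--Lagrange equation for $u$, use it to prove exponential decay, and then transfer to the Gaussian setting.

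\emph{Normalization.} First I fix the closest Gaussian. Since $d(u,\M)$ is attained (by a standard minimization over $(a,b)$, using that $u\notin\M$ and $u\neq0$), the scaling and translation invariances of $\delta$ and $d(\cdot,\M)$ let me assume the nearest point is $g=a\gamma$ with $a\ge0$. Orthogonality conditions then hold: $\langle u-g,\gamma\rangle=0$ and, if $a>0$, $\langle u-g,x_i\gamma\rangle=0$.

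\emph{Euler--Lagrange equation.} I differentiate $t\mapsto Q(u+t\varphi)$ at $t=0$. The derivative of $d(u,\M)^2$ is $2\langle u-g,\varphi\rangle$, by an envelope argument; this needs care, but uniqueness or local smoothness of the minimizer $g$ is standard near a nondegenerate point. Setting $\lambda=\|u\|_2^2$, one obtains
\[
-\Delta u-N\pi u-\pi u\log\frac{u^2}{\lambda}-Q(u-a\gamma)=c\,u,
\]
where the constant $c$ comes from the derivative of $\|u\|_2^2$ inside the logarithm. Because the entropy term is only $C^1$-differentiable in an appropriate sense (its derivative is $u\log u^2$), this step should be justified with test functions $\varphi\in C_c^\infty$ together with dominated convergence.

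\emph{Decay.} Elliptic regularity, via Kato's inequality and a Moser iteration for $u\log u^2$ (which is subcritical), gives $u\in L^\infty$, $u$ continuous, and $u(x)\to0$ as $|x|\to\infty$. Then $w=|u|$ satisfies, in the distributional sense away from large balls,
\[
-\Delta w\le \Bigl(\pi\log\tfrac{w^2}{\lambda}+C\Bigr)w+Qa\gamma .
\]
Since $w\to0$, the coefficient $\pi\log(w^2/\lambda)+C$ tends to $-\infty$, so for $|x|>R$ we have $-\Delta w+w\le Qa\gamma\le Ce^{-|x|}$. Comparison with the barrier $Me^{-\mu|x|}$ on $\{|x|>R\}$ then gives $w\le Me^{-\mu|x|}$. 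Indeed, the argument yields super-exponential decay, since $\mu$ can be taken arbitrarily large, up to the rate imposed by the Gaussian forcing. Hence $\int|x|^2u^2<\infty$.

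\emph{Transfer to the Gaussian formulation.} By \eqref{H1>H1gauss}, $v=u/\gamma\in H^1(\R^N,d\gamma)$. Integrating by parts, which is valid thanks to the decay of $u$, gives $\delta_\gamma[v]=\delta[u]$ and $d_\gamma(v,\M_\gamma)=d(u,\M)$, because $w\mapsto w/\gamma$ maps $\M$ onto $\M_\gamma$ isometrically from $L^2(dx)$ to $L^2(d\gamma)$. Thus $Q_\gamma(v)=Q(u)=Q$.

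The main obstacle is the rigorous derivation of the Euler--Lagrange equation: it requires differentiating the distance term, where nondegeneracy of the nearest Gaussian must be handled, and the non-smooth entropy term near $u=0$.
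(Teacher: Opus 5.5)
The overall plan is the same as the paper's: derive the Euler--Lagrange equation, get decay from Kato's inequality and an exponential barrier on $\{|x|>R\}$ (using $u\to0$ at infinity), then transfer to $H^1(\R^N,d\gamma)$ through the isometry $w\mapsto w/\gamma$, which maps $\M$ onto $\M_\gamma$. Those last two steps are fine. One small point: to get rate $\mu$, the coefficient should be $\mu^2$, i.e.\ $-\Delta w+\mu^2 w$, not $-\Delta w+w$; this is available because $\pi\log u^2\to-\infty$. The genuine gap is the Euler--Lagrange step, which you flag but then dismiss.

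You take the derivative of $d(\cdot,\M)^2$ at $u$ to be $2\langle u-g,\varphi\rangle$ for a \emph{single} nearest Gaussian $g=a\gamma$, appealing to uniqueness or smoothness of the projection "near a nondegenerate point". Nothing gives you this.
\begin{itemize}
\item Nothing places $u$ in a tubular neighbourhood of $\M$: by Proposition \ref{prop:BE} and $Q<2\pi$, an optimizer stays at a definite relative distance from $\M$.
\item The set $B(u)$ of maximizers of $b\mapsto(\int u\,\gamma(\cdot-b))^2$ can contain several points. The two-bump functions $u_a$ of Section \ref{sec:Q<2log2}, the very competitors used to prove $Q<2\pi\log 2$, have two nearest Gaussians, centred at $b_ae_1$ and $(a-b_a)e_1$.
\item Since $d(w,\M)^2=\|w\|_2^2-m(w)$ with $m$ a supremum of convex functions, such points are convex kinks of $\delta-Qd(\cdot,\M)^2$. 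These are natural locations for its minimizers, so they cannot be discarded.
\end{itemize}
At such a $u$, $t\mapsto d(u+t\varphi,\M)^2$ is not differentiable, and your equation with the single term $Q(u-a\gamma)$ is unjustified. Uniqueness alone would give G\^ateaux differentiability by Danskin's theorem, but uniqueness is exactly what is unknown.

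The missing idea is to use one-sided derivatives and a convex hull, as in Lemma \ref{lem:EL}.
\begin{enumerate}
\item Show that $G(u)=\{I(b)\gamma(\cdot-b):b\in B(u)\}$ is compact. This follows from $m(u)>0$ and $\gamma(\cdot-b)\rightharpoonup0$ as $|b|\to\infty$.
\item Prove the one-sided formula
\[
\lim_{t\to0^+}\frac{d(u+t\varphi,\M)^2-d(u,\M)^2}{t}=2\inf_{g\in G(u)}\int_{\R^N}(u-g)\varphi\,dx .
\]
\item Test the minimality of $u$ for $\delta-Qd(\cdot,\M)^2$ with $\pm\varphi$. This traps $\Lambda(\varphi)=\int u\varphi\,dx-\delta'[u]\varphi/(2Q)$ between $\inf_{g\in G(u)}\int g\varphi\,dx$ and $\sup_{g\in G(u)}\int g\varphi\,dx$.
\item A separation argument then gives some $h$ in the closed convex hull of $G(u)$ with
\[
-\Delta u=(\pi\log u^2+N\pi+Q)u-Qh \qquad\text{for } \|u\|_2=1.
\]
Equivalently, $\Lambda$ is $L^2$-bounded and lies in the subdifferential of the convex function $\tfrac12 m$ at $u$. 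Your constant $c$ is in fact $0$.
\end{enumerate}
Since $G(u)$ is a compact family of Gaussians, $|h|\le Ce^{-c|x|^2}$, and that is all your comparison argument uses. With $Q|h|$ in place of $Qa\gamma$, the rest of your proof goes through.
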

	Observe that, in view of \eqref{H1>H1gauss}, Theorem \ref{thm:main} does not automatically imply that $Q$ is attained in $H^1(\R^N,d\gamma)$, since a priori we do not know whether the optimizer $u$ found in $H^1(\R^N)$ satisfies 
	\[
	\displaystyle \int_{\R^N}|x|^2u(x)^2\,dx<\infty.
	\]
	This distinction is not merely technical: the Euclidean and Gaussian formulations have the same optimal constant, but compactness and decay properties are not automatically transferred from one setting to the other. In fact, we stated Theorems \ref{thm:main}--\ref{thm:main2} separately and in this order to reflect the strategy we develop to prove them. First, we tackle the problem in the Euclidean setting, establishing the existence of an optimizer for \eqref{eq.stabELSI} in $H^1(\R^N)$. Then, we exploit the Euler-Lagrange equation solved by this optimizer (see Lemma \ref{lem:EL} below) to obtain an exponential decay which is enough to prove the finiteness of  the second moment needed to recover a corresponding optimizer in the Gaussian setting \eqref{eq:stabLSI}.
	
	Our proof of Theorem \ref{thm:main} follows the same general strategy as the one developed in \cite{Konig_JEMS} for the Sobolev case, but its implementation in the logarithmic Sobolev setting requires a substantially different analysis at the perturbative level. The main new difficulty is the analysis of minimizing sequences approaching the manifold $\M$ of optimizers. In contrast with the Sobolev case, the entropy term does not allow for a direct global second-order expansion around $\M$. In fact, it was already pointed out in \cite{DEFFL25} that ``the Bianchi--Egnell strategy has so far not been applied to the logarithmic Sobolev inequality, probably because $u\mapsto u^2 \log u^2$ is not twice differentiable at the origin''. The key point of our argument is Proposition \ref{prop:BE}, where this difficulty is overcome by a localized analysis of perturbations of the Gaussian profile, separating the region in which the perturbation is genuinely small from the region in which the singular behavior of the entropy becomes relevant.
	
	\smallskip
	The remainder of the paper is organized as follows. In Section \ref{sec:Q<2log2} we prove the strict upper bound
	$Q<2\pi\log 2$, where $2\pi\log 2$ is the threshold associated with two asymptotically independent Gaussian bubbles. Section \ref{sec:estM} contains the perturbative Bianchi--Egnell type analysis close to the manifold $\M$. In Section \ref{sec:comp} we combine the analysis of the previous sections with a
	concentration-compactness argument to complete the proof of Theorem \ref{thm:main}. Finally, in Section \ref{sec:main2} we derive the Euler--Lagrange equation for an optimizer, prove its exponential decay, and show that the sharp constant in the stability inequality is attained in the Gaussian space $H^1(\R^N,\ d\gamma)$ too, as stated in Theorem \ref{thm:main2}.
	
	\bigskip
	{\bf Notation.} Throughout, we will use shorthand notation like $\|u\|_p$ to denote $L^p$ norms in $\R^N$, omitting the explicit reference to the domain of integration whenever possible. Moreover, the complement of the set $A$ will be denoted by $A^c$, and $\log_\pm$ will denote respectively the positive and negative part of the logarithmic function.
	
	\section{An upper bound on $Q$}
	\label{sec:Q<2log2}
	
	This section is devoted to the proof of the first part of Theorem \ref{thm:main}, as stated in the next proposition.
	\begin{proposition}
		\label{prop:Q<2pilog2}
		For every $N\geq1$, one has $Q<2\pi\log2$.
	\end{proposition}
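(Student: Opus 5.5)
The plan is to test the quotient $Q(u)=\delta[u]/d(u,\M)^2$ on the superposition of two far-apart Gaussians and show that its value lies strictly below $2\pi\log2$ for large separation. Since \eqref{eq.stabELSI} gives $Q=\inf_{H^1(\R^N)\setminus\M}Q$, it suffices to work in $H^1(\R^N)$. Let $g(x)=e^{-\frac\pi2|x|^2}$, fix a unit vector $e$, and set
\[
u_R:=g+g(\cdot-Re),\qquad R\to\infty .
\]

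\emph{Leading order.} The bubbles decouple at leading order. Each one is an optimizer of \eqref{ELSI}, and $\|u_R\|_2^2\to2\|g\|_2^2$. The only surviving contribution to $\delta[u_R]$ therefore comes from the normalization inside the logarithm, which gives
\[
\delta[u_R]\to 2\pi\|g\|_2^2\log2 .
\]
For the distance, any $w\in\M$ can essentially capture only one bubble, so $d(u_R,\M)^2\to\|g\|_2^2$. Hence $Q(u_R)\to2\pi\log2$. A preliminary check shows that equal weights are the right choice. With weights $1$ and $\varepsilon$, the limiting ratio is $\pi\big[(1+s)\log(1+s)-s\log s\big]/\min(s,1)$ with $s=\varepsilon^2$, and this function is minimized exactly at $s=1$. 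So no imbalance improves the limit, and the strict inequality must come from the interaction terms.

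\emph{Expansion of the deficit.} I will expand $\delta[u_R]$ and $d(u_R,\M)^2$ to first order in the interaction. For the deficit I use the Euler--Lagrange equation of $g$,
\[
-\Delta g-N\pi g=\pi g\log g^2+\lambda g ,
\]
with an explicit constant $\lambda$. This rewrites the cross gradient term $2\int\nabla g\cdot\nabla g(\cdot-Re)$ as an integral of $g(\cdot-Re)$ against $\pi g\log g^2$ plus a multiple of $\alpha_R:=\int g\,g(\cdot-Re)\,dx\sim c\,e^{-\pi R^2/4}$. The entropy term is then split between the two half-spaces bounded by the bisecting hyperplane. On the half-space where $g$ dominates I write $(g+h)^2\log(g+h)^2$ with $h=g(\cdot-Re)$ small, and expand. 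The key point is that the cross contributions carry the weight $\log g^2\approx-\pi|x|^2$. Near the hyperplane, where the overlap mass concentrates, this weight is of size $R^2$. The outcome I aim for is
\[
\delta[u_R]=2\pi\|g\|_2^2\log2-c_1R^2\alpha_R+O(\alpha_R)
\]
with $c_1>0$. The sign should come from the strict superadditivity of $t\mapsto t^2\log t^2$ on the overlap region, which makes the subtracted entropy term larger.

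\emph{Expansion of the distance, and the main obstacle.} For the distance, I optimize $w=a\,g(\cdot-b)$ near one bubble. The optimal $a$ and $b$ deviate from $(1,0)$ only by $O(\alpha_R)$-size corrections, so $d(u_R,\M)^2=\|g\|_2^2+O(\alpha_R)$, and here I only need an upper bound up to $O(\alpha_R)$. Combining the two expansions yields
\[
Q(u_R)\le 2\pi\log2-c_1'R^2\alpha_R+O(\alpha_R)<2\pi\log2
\]
for $R$ large. The main obstacle is the entropy expansion. I must show rigorously that the $R^2\alpha_R$ term has the favorable sign and genuinely dominates every $O(\alpha_R)$ error, including errors from the region where both bubbles are comparable and the logarithm is not smoothly expandable. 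I will first compute these integrals explicitly by completing squares in the direction $e$, since everything is Gaussian. If that is too crude, I will use the lower bound $(a+b)^2\log(a+b)^2\ge a^2\log a^2+b^2\log b^2+2ab\log\big(\max(a,b)^2\big)$ for $a,b>0$ and integrate it explicitly.
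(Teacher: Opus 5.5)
Your strategy is the paper's: two Gaussians at distance $R$, expanded in the overlap $\alpha_R=\int g\,g_R\,dx=e^{-\pi R^2/4}$, where $g_R:=g(\cdot-Re)$ and $\|g\|_2=1$. However, its central quantitative claim is false: the deficit has no gain of order $R^2\alpha_R$. Indeed $-\Delta g-N\pi g=\pi g\log g^2$ holds exactly (so $\lambda=0$), and the reflection $x\mapsto Re-x$ swaps $g$ and $g_R$. Hence
\[
2\int\nabla g\cdot\nabla g_R\,dx=2N\pi\alpha_R+\pi\int g\,g_R\left(\log g^2+\log g_R^2\right)dx=N\pi\alpha_R-\frac{\pi^2R^2}{2}\alpha_R .
\]
The entropy interaction, by contrast, is $2\int g\,g_R\log\max(g^2,g_R^2)\,dx+O(\alpha_R)=-\frac{\pi R^2}{2}\alpha_R+O(R\alpha_R)$, and it enters $\delta$ with the factor $-\pi$. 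So the two $R^2\alpha_R$ terms cancel exactly. In the paper this is the cancellation of $-\frac{\pi^2a^2q}{2}$ in $\|u_a'\|_2^2$ against $+\frac{\pi^2a^2q}{2}$ coming from $-\pi\int u_a^2\log u_a^2$. What survives, up to $O(\alpha_R)$, is
\[
-\pi\int g\,g_R\,\bigl|\log g^2-\log g_R^2\bigr|\,dx=-2\pi^2R\int g\,g_R\,|x\cdot e-R/2|\,dx=-2\pi R\alpha_R .
\]
The reason is that the two logarithms are each of size $R^2$ on the overlap but differ there only by $2\pi R|x\cdot e-R/2|$. Hence $\delta[u_R]=2\pi\log2-2\pi R\alpha_R+O(\alpha_R)$, and your final bound $Q(u_R)\le 2\pi\log2-c_1'R^2\alpha_R+O(\alpha_R)$ fails for large $R$. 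Your sign heuristic is also reversed. On the overlap, $t\mapsto t^2\log t^2$ is far from superadditive, because the cross term $2ab\log(a+b)^2$ is very negative, so the entropy interaction increases the deficit. The order-$R^2$ decrease comes from the antiparallel gradients, and the entropy term exactly undoes it.

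The proof can be repaired, and your fallback pointwise inequality is the right tool. Integrating it and using the gradient identity above gives the non-asymptotic bound $\delta[u_R]\le 2\pi(1+\alpha_R)\log\bigl(2(1+\alpha_R)\bigr)-2\pi R\alpha_R$, which even avoids the erfc asymptotics used in the paper. But the margin is now $R\alpha_R$, not $R^2\alpha_R$. So every contribution of order $R\alpha_R$ must be computed exactly; a half-space expansion generates such terms through odd moments like $\int_{\{y<0\}}y\,e^{-\pi y^2}dy$, so they cannot be lumped into errors. The distance must also be controlled to $o(R\alpha_R)$, and in the right direction. You say an upper bound on $d(u_R,\M)^2$ suffices, but to bound $Q(u_R)$ from above you need a lower bound. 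Equivalently, you need $m(u_R)=\sup_b f(b)^2\le 1+O(\alpha_R)$, where $f(b):=e^{-\frac\pi4|b|^2}+e^{-\frac\pi4|b-Re|^2}$. This requires excluding near-maximizers away from the two centres. The naive bound $f\le 1+e^{-\pi R^2/16}$ is useless, since $e^{-\pi R^2/16}\gg R\alpha_R$. The paper handles this in Step 1: first it shows $b_a=o(1/a)$, then uses the critical point equation. That equation also shows the optimal $b$ is displaced by about $R\alpha_R$, not $O(\alpha_R)$. Alternatively, $f(b)\le 1+e^{\pi/2}\alpha_R$ when $b\cdot e\le 1/R$, and $f(b)\le 1$ when $1/R\le b\cdot e\le R/2$ and $R$ is large; the remaining range follows by symmetry.
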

	The threshold $2\pi\log2$ corresponds to the energy level of two asymptotically disjoint Gaussian functions. To show that the sharp constant $Q$ in \eqref{eq.stabELSI} is strictly smaller than this value, we consider the family of functions
	\begin{equation}
	\label{eq:ua}
	u_a(x):=\gamma(x)+\gamma(x-ae_1)=e^{-\frac\pi2|x|^2}+e^{-\frac\pi2|x-ae_1|^2}
	\end{equation}
	with $a>0$ and $e_1=(1,0,\dots,0)\in\R^N$, and we establish an asymptotic expansion for $Q(u_a)$ in the limit $a\to+\infty$.
	
	\begin{remark}
		\label{rem:d_assunta}
		By definition, for every $u\in H^1(\R^N)$
		\[
		d(u,\M)^2=\inf_{a\in\R,\,b\in\R^N}\int_{\R^N}\left|u(x)-ae^{-\frac\pi2|x-b|^2}\right|^2\,dx=\|u\|_2^2-m(u)
		\]
		where
		\[
		m(u):=\sup_{b\in\R^N}\left(\int_{\R^N}u(x)e^{-\frac\pi2|x-b|^2}\,dx\right)^2.
		\]
		It then follows directly that
		\[
		d(u,\M)<\|u\|_2^2\qquad\forall u\not\equiv0\,.
		\]
		Indeed, if $d(u,\M)=\|u\|_2^2$, then $m(u)=0$, i.e.
		\[
		\int_{\R^N}u(x)e^{-\frac\pi2|x-b|^2}\,dx=0\qquad\forall b\in\R^N\,.
		\]
		In terms of the Fourier transform, this means that $\widehat{u*\gamma}\equiv0$, but since $\widehat{u*\gamma}=\widehat{u}\,\widehat{\gamma}$ and $\widehat{\gamma}=\gamma>0$ on $\R^N$, it then follows $\widehat{u}\equiv0$, that is $u\equiv0$. Hence, $m(u)>0$ for every $u\in H^1(\R^N)\setminus\left\{0\right\}$, and it is easily seen that it is attained, i.e. there exists $b\in \R^N$ (depending on $u$) such that $\displaystyle m(u)=\left(\int_{\R^N}u(x)e^{-\frac\pi2|x-b|^2}\,dx\right)^2$. 
	\end{remark}
	
	To prove Proposition \ref{prop:Q<2pilog2} note first that it is enough to deal with the case $N=1$. Indeed, if $N\geq2$, since
	\[
	u_a(x)=e^{-\frac\pi2\sum_{i\neq1}x_i^2}\left(e^{-\frac\pi2 x_1^2}+e^{-\frac\pi2(x_1-a)^2}\right)=:e^{-\frac\pi2|x'|^2}v_a(x_1)\,,
	\]
	writing $\displaystyle x=(x_1,x')\in\R^N$ with $x_1\in\R,\, x'\in\R^{N-1}$, we have
	\begin{equation}
		\label{eq:massuaN>1}
		\begin{split}
		\int_{\R^N}|u_a|^2\,dx=\int_{\R^N}e^{-\pi|x'|^2}v_a(x_1)^2dx=\int_{\R} v_a(x_1)^2\,dx_1\cdot \Pi_{i\neq1}\int_{\R}e^{-\pi x_i^2}dx_i=\int_\R v_a(x_1)^2dx_1
		\end{split}
	\end{equation}
	and
	\begin{equation}
		\label{eq:entuaN>1}
		\begin{split}
			\int_{\R^N}u_a^2\log u_a^2\,dx&\,=\int_{\R^N}e^{-\pi|x'|^2}v_a(x_1)^2\log e^{-\pi|x'|^2}v_a(x_1)^2dx\\
			&\,=-\pi\int_{\R^N}e^{-\pi|x'|^2}v_a(x_1)^2|x'|^2\,dx+\int_{\R^N}e^{-\pi|x'|^2}v_a(x_1)^2\log v_a(x_1)^2dx\\
			&\,=-\pi\int_{\R}v_a(x_1)^2dx_1\sum_{i\neq1}\int_{\R}x_i^2e^{-\pi x_i^2}\,dx_i+\int_{\R}v_a(x_1)^2\log v_a(x_1)^2dx_1\\
			&\,=-\frac{N-1}{2}\int_{\R}v_a(x_1)^2dx_1+\int_{\R}v_a(x_1)^2\log v_a(x_1)^2dx_1\,,
		\end{split}
	\end{equation}
	where we repeatedly used the identities
	\begin{equation}
		\label{eq:intexp}
		\int_{\R}e^{-\pi x^2}\,dx=1\qquad\text{and}\qquad\int_{\R}x^2e^{-\pi x^2}\,dx=\frac1{2\pi}\,.
	\end{equation}
	Moreover, since
	\[
		\frac{\partial u_a}{\partial x_1}(x)=e^{-\frac\pi2|x'|^2}v_a'(x_1)\,,\qquad\frac{\partial u_a}{\partial x_j}(x)=-\pi x_j e^{-\frac\pi2|x'|^2}v_a(x_1)\,,\quad j=2,\dots,N,
	\]
	we obtain
	\begin{equation}
		\label{eq:graduaN>1}
		\begin{split}
			\int_{\R^N}|\nabla u_a|^2\,dx&\,=\int_{\R^N}e^{-\pi|x'|^2}\left|v_a'(x_1)\right|^2\,dx+\int_{\R^N}v_a(x_1)^2\pi^2e^{-\pi|x'|^2}\sum_{i\neq1}x_i^2\,dx\\
			&\,=\int_{\R}\left|v_a'(x_1)\right|^2dx_1+\pi^2\int_{\R}v_a(x_1)^2dx_1\sum_{i\neq1}\int_{\R}x_i^2 e^{-\pi x_i^2}\,dx_i\\
			&\,=\int_{\R}\left|v_a'(x_1)\right|^2dx_1+\frac{(N-1)\pi}2\int_{\R}v_a(x_1)^2dx_1\,.
		\end{split}
	\end{equation}
	Finally, since by Remark \ref{rem:d_assunta}
	\[
	d(u_a,\M)^2=\|u_a\|_2^2-m(u_a)=\|u_a\|_2^2-\left(\sup_{b\in R^N}\int_{\R^N}e^{-\frac\pi2|x'|^2}v_a(x_1)e^{-\frac\pi2|x-b|^2}\,dx\right)^2
	\] 
	and
	\[
	\begin{split}
	\int_{\R^N}e^{-\frac\pi2|x'|^2}v_a(x_1)e^{-\frac\pi2|x-b|^2}\,dx&\,=\int_{\R}v_a(x_1)e^{-\frac\pi2(x_1-b_1)^2}\,dx_1\cdot\Pi_{i\neq1}\int_{\R}e^{-\frac\pi2\left(x_i^2+(x_i-b_i)^2\right)}dx_i\\
	&\,=\int_{\R}v_a(x_1)e^{-\frac\pi2(x_1-b_1)^2}\,dx_1\cdot\Pi_{i\neq1}e^{-\frac\pi4 b_i^2}\,,
	\end{split}
	\]
	we have $m(u_a)=m(v_a)$, that together with \eqref{eq:massuaN>1} shows that
	\[
	d(u_a,\M)=d(v_a,\M)
	\]
	(where with a slight abuse of notation we denoted with $\M$ the manifold of gaussian optimizers both in $\R^N$ and in $\R$).
	Combining with \eqref{eq:massuaN>1}, \eqref{eq:entuaN>1} and \eqref{eq:graduaN>1} yields
	\[
	\begin{split}
	Q(u_a)&\,=\frac{\delta[u_a]}{d(u_a,\M)^2}\\
	&\,=\frac{\|v_a'\|_2^2+\frac{(N-1)\pi}2\|v_a\|_2^2+\frac{(N-1)\pi}2\|v_a\|_2^2-\pi\int_{\R}v_a^2\log v_a^2\,dx_1+\pi\|v_a\|_2^2\log\|v_a\|_2^2-N\pi\|v_a\|_2^2}{d(v_a,\M)^2}\\
	&\,=\frac{\|v_a'\|_2^2-\pi\int_{\R}v_a^2\log v_a^2\,dx_1+\pi\|v_a\|_2^2\log\|v_a\|_2^2-\pi\|v_a\|_2^2}{d(v_a,\M)^2}=\frac{\delta[v_a]}{d(v_a,\M)^2}=Q(v_a)\,.
	\end{split}
	\]
	Hence, if for any $a>0$ we have $Q(v_a)<2\pi\log2$, Proposition \ref{prop:Q<2pilog2} is proved in every dimension.
	
	\begin{proof}[Proof of Proposition \ref{prop:Q<2pilog2} with $N=1$]
		Given $a>0$, let $u_a\in H^1(\R)$ be as in \eqref{eq:ua}. Setting $\displaystyle q:=q(a)=e^{-\frac\pi4a^2}$, a direct computation immediately shows that
		\begin{equation}
			\label{eq:massua}
			\int_\R u_a^2\,dx= 2+2\int_\R e^{-\frac\pi2(x^2+(x-a)^2)}\,dx=2+2q
		\end{equation}
		and (recalling \eqref{eq:intexp})
		\begin{equation}
			\label{eq:derua}
			\begin{split}
			\int_{\R}|u_a'|^2\,dx&\,=\pi^2\int_{\R}x^2e^{-\pi x^2}\,dx+\pi^2\int_{\R}(x-a)^2e^{-\pi(x-a)^2}\,dx+2\pi^2\int_{\R}x(x-a)e^{-\frac\pi2\left(x^2+(x-a)^2\right)}\,dx\\
			&\,=\pi+2\pi^2\int_{\R}x(x-a)e^{-\frac\pi2\left(x^2+(x-a)^2\right)}\,dx=\pi+2\pi^2q\int_\R\left(y+\frac a2\right)\left(y-\frac a2\right)e^{-\pi y^2}\,dy\\
			&\,=\pi+2\pi^2q\left(\frac1{2\pi}-\frac{a^2}4\right)=\pi+\pi q-\frac{\pi^2a^2q}2\,.
			\end{split}
		\end{equation}
		We need to derive analogous expansions for the distance between $u_a$ and the manifold $\M$ and for the entropy term. For the sake of clarity, the rest of the proof is divided in some steps.
		
		\smallskip
		{\em Step 1: expansion of $d(u_a,\M)$.} By Remark \ref{rem:d_assunta}, for every $a>0$ there exists $b_a\in\R$ such that
		\[
		m(u_a)=\sup_{b\in\R}\left(\int_{\R}u_a(x)e^{-\frac\pi2(x-b)^2}dx\right)^2=\left(\int_{\R}u_a(x)e^{-\frac\pi2(x-b_a)^2}dx\right)^2=\left(e^{-\frac\pi4b_a^2}+e^{-\frac\pi4(a-b_a)^2}\right)^2,
		\]
		so that by \eqref{eq:massua}
		\begin{equation}
			\label{eq:distua_int}
		d(u_a,\M)^2=2+2q-\left(e^{-\frac\pi4b_a^2}+e^{-\frac\pi4(a-b_a)^2}\right)^2.
		\end{equation}
		Observe that, for every $a>0$, the function $f_a(s):=e^{-\frac\pi4s^2}+e^{-\frac\pi4(a-s)^2}$ is symmetric about $s=a/2$. Hence, with no loss of generality we have that a global maximum point $b_a$ of $f_a$ is such that $b_a\in[0,a/2]$. Moreover, it is readily seen that $|b_a|=o(1/a)$ as $a\to+\infty$, since if it where $|b_a|\geq C/a$ for some $C>0$, then as $a\to+\infty$ we would have 
		\[
		f_a(b_a)\leq e^{-\frac{\pi C^2}{2a^2}}+e^{-\frac\pi{16}a^2}+o(e^{-\frac\pi{16}a^2})=1-\frac{\pi C^2}{2a^2}+o\left(\frac1{a^2}\right)<1\,,
		\]
		which is impossible since $f_a(b_a)\geq f_a(0)>1$ for every $a$. Therefore, since $b_a$ is a critical point for $f_a$, combining $b_a=o(1/a)$ as $a\to+\infty$ with
		\[
		f_a'(b_a)=-\frac\pi2e^{-\frac\pi4 b_a^2}\left(b_a-(a-b_a)e^{-\frac\pi4(a^2-2ab_a)}\right)=0
		\]
		yields
		\[
		b_a=ae^{-\frac\pi4 a^2}+o\left(ae^{-\frac\pi4 a^2}\right)=aq+o(aq)\qquad\text{as }a\to+\infty\,,
		\]
		in turn entailing
		\[
		\begin{split}
		f_a(b_a)^2&\,=\left(e^{-\frac\pi4a^2q^2+o\left(a^2q^2\right)}+e^{-\frac\pi4\left(a-aq+o\left(aq\right)\right)^2}\right)^2\\
		&\,=\left(1-\frac\pi4a^2q^2+q+\frac\pi2a^2q^2+o\left(a^2q^2\right)\right)^2=\left(1+q+\frac\pi4a^2q^2+o\left(a^2q^2\right)\right)^2\\
		&\,=1+2q+\frac\pi2a^2q^2+o\left(a^2q^2\right)=1+2q+\frac\pi2a^2q^2+o\left(a^2q^2\right)
		\end{split}
		\]
		as $a\to+\infty$. Plugging into \eqref{eq:distua_int} and recalling the definition of $q$ we obtain
		\begin{equation}
			\label{eq:distua}
			d(u_a,\M)^2=1-\frac\pi2a^2e^{-\frac\pi2a^2}+o\left(a^2e^{-\frac\pi2a^2}\right)=1-o(q)\qquad\text{as }a\to+\infty.
		\end{equation}
	
		{\em Step 2: expansion of $\displaystyle\int_{\R}u_a^2\log u_a^2\,dx$.} Changing the variable $\displaystyle x\mapsto x+\frac a2$ we rewrite
		\[
		\int_{\R}u_a(x)^2\log u_a(x)^2\,dx=\int_\R \left(e^{-\frac\pi2\left(x-\frac a2\right)^2}+e^{-\frac\pi2\left(x+\frac a2\right)^2}\right)^2\log\left(e^{-\frac\pi2\left(x-\frac a2\right)^2}+e^{-\frac\pi2\left(x+\frac a2\right)^2}\right)^2dx
		\] 
		and we note that, for every $x\in\R$,
		\[
		e^{-\frac\pi2\left(x-\frac a2\right)^2}+e^{-\frac\pi2\left(x+\frac a2\right)^2}=e^{-\frac\pi2x^2}e^{-\frac\pi8a^2}\left(e^{\frac\pi2ax}+e^{-\frac\pi2ax}\right)=2e^{-\frac\pi8a^2}e^{-\frac\pi2x^2}\cosh(\pi ax/2).
		\]
		Plugging into the above formula for the entropy term gives
		\begin{equation}
			\label{eq:splitent}
			\int_{\R}u_a^2\log u_a^2\,dx=\int_\R 4qe^{-\pi x^2}\cosh^2(\pi ax/2)\log\left(4qe^{-\pi x^2}\cosh^2(\pi ax/2)\right)dx=:A+B+C
		\end{equation}
		with
		\[
		\begin{split}
			A:=&\,4q\log(4q)\int_\R e^{-\pi x^2}\cosh^2(\pi ax/2)\,dx\\
			B:=&-4\pi q\int_\R x^2e^{-\pi x^2}\cosh^2(\pi ax/2)\,dx\\
			C:=&\,4q\int_\R e^{-\pi x^2}\cosh^2(\pi ax/2)\log\cosh^2(\pi ax/2)dx\,.
		\end{split}
		\]
		Now, recalling \eqref{eq:intexp},
		\begin{equation}
			\label{eq:A}
		\begin{split}
		A=&\,q\log(4q)\int_\R e^{-\pi x^2}(e^{\pi ax/2}+e^{-\pi ax/2})^2\,dx=q\log(4q)\int_\R e^{-\pi x^2}(e^{\pi ax}+e^{-\pi ax}+2)\,dx\\
		=&\,2q\log(4q)\int_\R e^{-\pi x^2}\,dx+q\log(4q)\left(\int_\R e^{-\pi(x^2-ax)}\,dx+\int_\R e^{-\pi(x^2+ax)}\,dx\right)\\
		=&\,2q\log(4q)+\log(4q)\left(\int_\R e^{-\pi\left(x-\frac a2\right)^2}\,dx+\int_\R e^{-\pi\left(x+\frac a2\right)^2}\right)=(2+2q)\log(4q)\,.
		\end{split}
		\end{equation}
		Furthermore, 
		\begin{equation}
			\label{eq:B}
		\begin{split}
			B=&\,-\pi q\int_\R x^2e^{-\pi x^2}(e^{\pi ax/2}+e^{-\pi ax/2})^2\,dx=-\pi q\int_\R x^2e^{-\pi x^2}(e^{\pi ax}+e^{-\pi ax}+2)\,dx\\
			=&\,-2\pi q\int_\R x^2e^{-\pi x^2}\,dx -\pi\int_\R x^2\left(e^{-\pi\left(x-\frac a2\right)^2}+e^{-\pi\left(x+\frac a2\right)^2}\right)dx\\
			=&\,-q-\pi\int_\R \left(y+\frac a2\right)^2e^{-\pi y^2}\,dy-\pi\int_\R \left(y-\frac a2\right)^2e^{-\pi y^2}\,dy\\
			=&\,-q-2\pi\left(\int_\R y^2 e^{-\pi y^2}\,dy+\frac{a^2}4\int_\R e^{-\pi y^2}\,dy\right)=-q-2\pi\left(\frac1{2\pi}+\frac{a^2}4\right)=-q-1-\frac{\pi a^2}{2}\,,
		\end{split}
		\end{equation}
		where we used again \eqref{eq:intexp} and the fact that $\displaystyle \int_\R y e^{-\pi y^2}\,dy=0$.
		
		To deal with the last term in \eqref{eq:splitent}, observe that
		\[
		\cosh z =\frac{e^{|z|}}2\left(1+e^{-2|z|}\right)\qquad\forall z\in\R\,,
		\]
		so that
		\[
		\log\cosh^2z=2\log\cosh z=2|z|-\log4+2\log\left(1+e^{-2|z|}\right)\,.
		\]
		Hence, 
		\begin{equation}
			\label{eq:splitC}
		\begin{split}
		C=&\,4q\int_{\R}e^{-\pi x^2}\cosh^2(\pi ax/2)\pi a|x|\,dx-4q\log4\int_\R e^{-\pi x^2}\cosh^2(\pi ax/2)\,dx\\
		&\qquad\qquad+8q\int_\R e^{-\pi x^2}\cosh^2(\pi ax/2)\log(1+e^{-\pi a |x|})\,dx\\
		&\,\geq 4q\int_{\R}e^{-\pi x^2}\cosh^2(\pi ax/2)\pi a|x|\,dx-4q\log4\int_\R e^{-\pi x^2}\cosh^2(\pi ax/2)\,dx\\
		&\,=4q\int_{\R}e^{-\pi x^2}\cosh^2(\pi ax/2)\pi a|x|\,dx-(2+2q)\log4\,,
		\end{split}
		\end{equation}
		where the inequality is justified by the positivity of the third integral in the computation of $C$, and the last equality is obtained repeating the same computations developed for $A$ above. Moreover, 
		\[
		\begin{split}
			4q\int_{\R}e^{-\pi x^2}\cosh^2(\pi ax/2)\pi a|x|\,dx&\,=\pi aq\int_\R|x|e^{-\pi x^2}(e^{\pi ax}+e^{-\pi ax}+2)\,dx\\
			&\,=2\pi a q\int_\R |x|e^{-\pi x^2}+2\pi aq \int_0^{+\infty} xe^{-\pi x^2}\left(e^{\pi a x}+e^{-\pi ax}\right)dx\\
			&\,=2aq+2\pi a\int_0^{+\infty}xe^{-\pi\left(x-\frac a2\right)^2}\,dx+2\pi a\int_0^{+\infty}xe^{-\pi\left(x+\frac a2\right)^2}\,dx\,,
		\end{split}
		\]
		and since for every fixed $c\in\R$
		\[
		2\pi\int_0^{+\infty}xe^{-\pi(x+c)^2}\,dx=e^{-\pi c^2}-2\pi c\int_c^{+\infty}e^{-\pi y^2}\,dy
		\]
		we obtain
		\[
		\begin{split}
			4q\int_{\R}e^{-\pi x^2}\cosh^2(\pi ax/2)\pi a|x|\,dx&\,=4aq+\pi a^2\int_{-\frac a2}^{+\infty}e^{-\pi y^2}\,dy-\pi a^2\int_{\frac a2}^{+\infty}e^{-\pi y^2}\,dy\\
			&\,=4aq+\pi a^2\int_{-\frac a2}^{\frac a2}e^{-\pi y^2}\,dy=4aq+\pi a^2\left(1-\text{\normalfont erfc}(\sqrt{\pi}a/2)\right)\,,
		\end{split}
		\]
		where by definition $\displaystyle \text{\normalfont erfc}(z)=\frac2{\sqrt\pi}\int_z^{+\infty}e^{-t^2}\,dt$. Relying on the well-known expansion 
		\[
		\text{\normalfont erfc}(z)=\frac{e^{-z^2}}{z\sqrt\pi}\left(1+O\left(\frac1{z^2}\right)\right)\qquad\text{as }z\to+\infty
		\]
		we have
		\[
		4q\int_{\R}e^{-\pi x^2}\cosh^2(\pi ax/2)\pi a|x|\,dx=4aq+\pi a^2\left(1-\frac{2q}{\pi a}+O\left(\frac q{a^3}\right)\right)
		\]
		as $a\to+\infty$. Plugging into \eqref{eq:splitC} yields for $a$ large enough
		\[
		C\geq -(2+2q)\log4+4aq+\pi a^2\left(1-\frac{2q}{\pi a}+O\left(\frac q{a^3}\right)\right)\,,
		\]
		and combining with \eqref{eq:splitent}, \eqref{eq:A} and \eqref{eq:B}
		\begin{equation}
			\label{eq:entua}
		\begin{split}
			\int_\R u_a^2\log u_a^2\,dx&\,\geq (2+2q)\log q-q-1+2aq+\frac{\pi a^2}2+O\left(\frac qa\right)\\
			&\,=-(1+q)\frac{\pi a^2}2-q-1+2aq+\frac{\pi a^2}2+O\left(\frac qa\right)\\
			&\,=-\frac{\pi a^2q}2-q-1+2aq+O\left(\frac qa\right)\qquad\text{as }a\to+\infty.
		\end{split}
		\end{equation}
	
		{\em Step 3: conclusion.} Coupling \eqref{eq:massua}, \eqref{eq:derua} and \eqref{eq:entua} we have
		\[
		\begin{split}
			\delta[u_a]\leq&\,\pi+\pi q-\frac{\pi^2 a^2q}{2}+\frac{\pi^2a^2q}2+\pi q+\pi-2\pi aq+2\pi(1+q)\log(2(1+q))-2\pi(1+q)+O\left(\frac qa\right)\\
			=&\,-2\pi aq+2\pi(1+q)\log2+2\pi(1+q)\log(1+q)+O\left(\frac qa\right)=2\pi\log2-2\pi aq+O(q)
		\end{split}	 
		\]
		for sufficiently large $a$, and together with \eqref{eq:distua} this implies
		\[
		\begin{split}
		Q(u_a)\leq \frac{2\pi\log2-2\pi aq+O(q)}{1-o(q)}&\,=\left(2\pi\log2-2\pi aq+O(q)\right)(1+o(q))\\
		&\,=2\pi\log2-2\pi aq+o(aq)<2\pi\log2
		\end{split}
		\]
		as soon as $a$ is large enough, concluding the proof of Proposition \ref{prop:Q<2pilog2}.
	\end{proof}
	\section{A priori estimates close to $\M$}
	\label{sec:estM}
	The aim of this section is to obtain a general lower bound on $Q(u)$ for functions $u$ close to the manifold $\M$ of log-Sobolev optimizers in $H^1(\R^N)$.  Precisely, the main result of this section is the following.
	\begin{proposition}
		\label{prop:BE}
		Let $(u_n)_n\subset H^1(\R^N)$ be such that 
		\begin{equation}
		\label{eq:hyp_BE}
		\inf_n\|u_n\|_2>0,\qquad\sup_n Q(u_n)<\infty, \qquad\text{and}\qquad \lim_n d(u_n,\M)=0\,.
		\end{equation}
		Then
		\begin{equation}
			\label{eq:Q>2}
		\liminf_n Q(u_n)\geq2\pi\,.
		\end{equation}
	\end{proposition}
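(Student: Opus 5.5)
Since $\delta$ and $d(\cdot,\M)^2$ are both $2$-homogeneous and invariant under translations, I will first normalize. By Remark \ref{rem:d_assunta}, for each $n$ there are $a_n\neq0$ and $b_n$ realizing $d(u_n,\M)$. Using $\inf_n\|u_n\|_2>0$ and $d(u_n,\M)\to0$, I can rescale and translate so that
\[
u_n=\gamma+\varepsilon_n w_n,\qquad \|w_n\|_2=1,\qquad \varepsilon_n=d(u_n,\M)/|a_n|\to0 .
\]
Optimality of $(a_n,b_n)$ gives the orthogonality conditions
\[
\int w_n\gamma\,dx=0,\qquad \int w_n\,x_i\gamma\,dx=0\quad (i=1,\dots,N).
\]
The target is then $\delta[\gamma+\varepsilon_n w_n]\ge (2\pi-o(1))\varepsilon_n^2$.

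\textbf{Formal second-order expansion.} Expanding formally to second order, the first-order terms vanish because $\gamma$ is a critical point. Because $\int w_n\gamma=0$, the normalization $\|u_n\|_2^2\log\|u_n\|_2^2$ contributes only at order $\varepsilon_n^2$. The quadratic part equals $\varepsilon_n^2\,\mathcal{L}(w_n)$, where
\[
\mathcal{L}(w)=\int|\nabla w|^2+\pi^2\int|x|^2w^2-N\pi\int w^2-2\pi\int w^2 .
\]
This uses $\phi''(s)=2\log s^2+6$ for $\phi(t)=t^2\log t^2$ and $\log\gamma^2=-\pi|x|^2$. The operator $-\Delta+\pi^2|x|^2-N\pi$ is a shifted harmonic oscillator with eigenvalues $2\pi k$, $k\ge0$, and Hermite-type eigenfunctions $H(x)\gamma$ of degree $k$. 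Levels $k=0,1$ are exactly $\gamma$ and $x_i\gamma$, which are killed by the orthogonality conditions. Hence $\mathcal{L}(w_n)\ge(4\pi-2\pi)\|w_n\|_2^2=2\pi$, and this is precisely \eqref{eq:Q>2}. All the work is in justifying this expansion with an $o(\varepsilon_n^2)$ error, for which I need a one-sided bound only: an upper bound on the entropy of $u_n$.

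\textbf{Pointwise Taylor bound.} By Taylor's formula with integral remainder, for $s=\gamma(x)>0$ and $r=\varepsilon_n w_n(x)$,
\[
\phi(s+r)=\phi(s)+\phi'(s)r+r^2\int_0^1(1-\theta)\bigl(2\log(s+\theta r)^2+6\bigr)\,d\theta .
\]
Using $\log(s+\theta r)^2\le\log s^2+2\log(1+|r|/s)$, the remainder is at most the ``Hessian'' term $r^2(\log s^2+3)$ plus an error $E\le 2r^2\log(1+|r|/s)$. I then split $\R^N$ into a good set $G_n=\{|\varepsilon_n w_n|\le\eta\gamma\}$ and a bad set $B_n=G_n^c$, with $\eta>0$ small to be sent to $0$ at the end.
\begin{itemize}
\item On $G_n$, $E\le C\eta\,\varepsilon_n^2w_n^2$, so it contributes at most $C\eta\varepsilon_n^2$.
\item On $B_n$, $E\le C r^2\bigl(\log r^2-\log\gamma^2\bigr)=C\bigl(r^2\log r^2+\pi|x|^2r^2\bigr)$.
\end{itemize}
The genuinely singular part is the term $\int_{B_n}r^2\log r^2$; the term $\pi|x|^2r^2$ will be compared with the confining potential already present in $\mathcal{L}$.

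\textbf{Main obstacle: the bad set.} This is the step I expect to be hardest. To control $\int_{B_n}r^2\log r^2$, I plan to apply \eqref{ELSI} itself to a smooth truncation $f_n=\varepsilon_n w_n\chi_n$ localized near $B_n$, for instance $\chi_n$ a smoothed version of $(|\varepsilon_n w_n|/\gamma-\eta/2)_+$-type cut-offs. Inequality \eqref{ELSI} gives
\[
\int f_n^2\log f_n^2\le\frac1\pi\int|\nabla f_n|^2-N\|f_n\|_2^2+\|f_n\|_2^2\log\|f_n\|_2^2 .
\]
Here $\|f_n\|_2^2\le\varepsilon_n^2$, so the last term is very negative relative to $\|f_n\|_2^2$. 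I expect it to absorb both the $\pi|x|^2r^2$ contribution and the gradient cost of the cut-off, since on $B_n$ one has $\gamma<|r|/\eta$, i.e. $|x|^2\lesssim-\log r^2$.

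The hypothesis $\sup_nQ(u_n)<\infty$ enters here, giving $\delta[u_n]\le C\varepsilon_n^2$. Combined with the good-set expansion, this yields an a priori bound $\int|\nabla w_n|^2+\int|x|^2w_n^2\le C$. It also gives $\int_{B_n}w_n^2\to0$, because $B_n\subset\{|x|^2\gtrsim\log(1/\varepsilon_n)\}$ up to a set where $|w_n|$ is huge, and $\|w_n\|_2=1$. Consequently the truncation costs are $o(\varepsilon_n^2)$.

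Collecting the good-set and bad-set estimates gives $\delta[u_n]\ge\varepsilon_n^2\bigl(\mathcal{L}(w_n)-C\eta-o(1)\bigr)$. Since $d(u_n,\M)^2=a_n^2\varepsilon_n^2$, this yields $\liminf_n Q(u_n)\ge 2\pi-C\eta$, and letting $\eta\to0$ concludes.
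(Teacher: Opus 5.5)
Your formal picture is correct: the quadratic form $\mathcal L$, the oscillator levels $2\pi k$, and the good/bad splitting via the integral Taylor remainder all match the paper. The argument breaks at the step where you collect everything into $\delta[u_n]\ge\varepsilon_n^2\bigl(\mathcal L(w_n)-C\eta-o(1)\bigr)$, and so does the claimed a priori bound on $\int|x|^2w_n^2$ over all of $\R^N$.

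The absorption mechanism rests on a reversed inequality. On $B_n$ you have $|r|>\eta\gamma=\eta e^{-\pi|x|^2/2}$, i.e.\ $\pi|x|^2>\log\eta^2-\log r^2$, so $|x|^2$ is bounded \emph{below} by $-\log r^2$, not above. Hence $\int_{B_n}\bigl(r^2\log r^2+\pi|x|^2r^2\bigr)=\int_{B_n}r^2\log(r^2/\gamma^2)$ cannot be absorbed by $\|f_n\|_2^2\log\|f_n\|_2^2$. That term is only of size $\varepsilon_n^2m_n\bigl(2|\log\varepsilon_n|+|\log m_n|\bigr)$, with $m_n=\|f_n\|_2^2/\varepsilon_n^2$.

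In fact the collected inequality is false in general, because the hypotheses do not control $\int|x|^2w_n^2$. Add to a fixed admissible profile a far-away, polynomially decaying tail with finite entropy, infinite second moment and $L^2$-mass of order $1/|\log\varepsilon_n|$. This keeps $Q(u_n)$ bounded, yet $\mathcal L(w_n)=+\infty$ while $\delta[u_n]<\infty$. So the spectral gap cannot be applied to $w_n$ itself.

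The right observation is that on $B_n$ the term $\pi|x|^2r^2$ in your bound for $E$ exactly cancels the confining part $-\pi|x|^2r^2$ of the Hessian term. Hence $R\le r^2\log r^2+C_\eta r^2$ there, i.e.\ $-R/\varepsilon_n^2\ge 2|\log\varepsilon_n|w_n^2-w_n^2\log_+w_n^2-C_\eta w_n^2$. The $\log_+$ term is controlled by Gagliardo--Nirenberg through a sublinear power of $\|\nabla w_n\|_2^2$, so no log-Sobolev inequality on truncations is needed.

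You lose the potential on $B_n$, but $\sup_nQ(u_n)<\infty$ then gives:
\begin{itemize}
\item $H^1$-boundedness of $(w_n)_n$;
\item $\sup_n\int_{G_n}|x|^2w_n^2<\infty$, on the good set only;
\item $\int_{B_n}w_n^2\to0$.
\end{itemize}
This yields tightness and $w_n\to w$ in $L^2(\R^N)$, with $\|w\|_2=1$ and the orthogonality conditions preserved.

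What your plan is missing is this compactness step followed by a passage to the limit:
\begin{itemize}
\item weak lower semicontinuity of $\|\nabla w_n\|_2^2$;
\item convergence of $\int_{B_R}-R_n/\varepsilon_n^2$ to $\int_{B_R}(\pi|x|^2-3)w^2$ for each fixed $R$ (the part of $B_n$ inside $B_R$ has vanishing measure);
\item a contribution from $B_R^c$ that is asymptotically nonnegative as $R\to\infty$.
\end{itemize}
Together these give $\liminf_nQ(u_n)\ge\|\nabla w\|_2^2+\pi^2\int|x|^2w^2-\pi(N+2)$. The oscillator bound $\ge\pi(N+4)$ is then applied to the limit $w$, exactly as in Lemma \ref{lem:boundBE} and the proof of Proposition \ref{prop:BE}.
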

	The proof of Proposition \ref{prop:BE} relies on the next preliminary lemma.
	\begin{lemma}
		\label{lem:boundBE}
	For every $n$, let $u_n:=\gamma+\varepsilon_n\phi_n$, where $\varepsilon_n\to0$ as $n\to\infty$ and $\phi_n\in H^1(\R^N)$ is such that
	\begin{equation}
	\label{eq:phiL2=1}
	\|\phi_n\|_2=1
	\end{equation}
	and
	\begin{equation}
	\label{eq:phi_ortT}
	\int_{\R^N}\phi_n\gamma\,dx=\int_{\R^N}\phi_n\partial_i\gamma\,dx=0\qquad\forall i=1,\dots,N\,.
	\end{equation}
	If $\displaystyle\sup_n Q(u_n)<\infty$, then $(\phi_n)_n$ is bounded in $H^1(\R^N)$ and, up to subsequences, $\phi_n\to\phi$ in $L^2(\R^N)$ as $n\to\infty$, for some $\phi\in H^1(\R^N)$ such that
	\begin{equation}
	\label{eq:lim_ortM}
	\int_{\R^N}\phi\gamma\,dx=\int_{\R^N}\phi\partial_i\gamma\,dx=0\qquad\forall i=1,\dots,N\,.
	\end{equation}
 	\end{lemma}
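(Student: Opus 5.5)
The plan is to expand $\delta[u_n]$ around $\gamma$ to second order in $\varepsilon_n$. I will only bound the entropy from above, and I expect the expansion to produce two positive quantities: the Dirichlet energy $\int|\nabla\phi_n|^2$, and a confining term $\pi^2\int|x|^2\phi_n^2$ coming from $\log\gamma^2=-\pi|x|^2$. A uniform bound on both gives $H^1$-boundedness plus tightness, hence $L^2$-compactness.

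\emph{Step 1: upper bound on $\delta[u_n]$.} Since $\gamma\in\M$, $d(u_n,\M)\le\|u_n-\gamma\|_2=\varepsilon_n$. So $\sup_nQ(u_n)<\infty$ gives $\delta[u_n]\le C\varepsilon_n^2$. By \eqref{eq:phiL2=1}--\eqref{eq:phi_ortT} we also have $\|u_n\|_2^2=1+\varepsilon_n^2$, so $\|u_n\|_2^2\log\|u_n\|_2^2=O(\varepsilon_n^2)$. The gradient and mass terms expand exactly as
\[
\|\nabla\gamma\|_2^2+2\varepsilon_n\!\int\!\nabla\gamma\cdot\nabla\phi_n+\varepsilon_n^2\|\nabla\phi_n\|_2^2
\quad\text{and}\quad
1+\varepsilon_n^2 .
\]

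\emph{Step 2: pointwise upper bound for the entropy.} Write $F(s)=s^2\log s^2$. I split $\R^N$ into the two regions below.
\begin{itemize}
\item On $A_n=\{\varepsilon_n|\phi_n|\le\gamma\}$, apply Taylor's formula for $F$ at $\gamma$. Since $F''(s)=2\log s^2+6$ and $|u_n|\le2\gamma$ there, $F''\le 2\log(4\gamma^2)+6=-2\pi|x|^2+C$ along the segment. This gives
\[
F(u_n)\le F(\gamma)+F'(\gamma)\varepsilon_n\phi_n+\tfrac12\varepsilon_n^2\phi_n^2\bigl(-2\pi|x|^2+C\bigr).
\]
\item On $A_n^c$ we have $\gamma<\varepsilon_n|\phi_n|$, so $|u_n|\le2\varepsilon_n|\phi_n|$. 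Here I bound $F(u_n)$ by $4\varepsilon_n^2\phi_n^2\log(4\varepsilon_n^2\phi_n^2)$ when this is positive. In the remaining terms $F(\gamma)$ and $F'(\gamma)\varepsilon_n\phi_n$, the factor $\gamma\le\varepsilon_n|\phi_n|$ makes everything $\le C\varepsilon_n^2\phi_n^2(1+|x|^2)$. The point is that the factor $-2\pi|x|^2$ can be kept with the favourable sign in the region where it is large.
\end{itemize}
Integrating, the zero-order terms reproduce $\delta[\gamma]=0$. The first-order term $2\varepsilon_n\langle-\Delta\gamma-\pi\gamma\log\gamma^2-(N\pi+\pi)\gamma,\phi_n\rangle$ vanishes, since $\gamma$ solves the Euler--Lagrange equation, i.e. this operator applied to $\gamma$ is a multiple of $\gamma$, and $\phi_n\perp\gamma$.

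\emph{Step 3: the singular region, the main obstacle.} Controlling $\varepsilon_n^2\int\phi_n^2\log_+(4\varepsilon_n^2\phi_n^2)$ is where the non-smoothness of $F$ at $0$ bites. Since $\log(\varepsilon_n^2)<0$, this is at most $\varepsilon_n^2\bigl(\int\phi_n^2\log_+\phi_n^2+C\bigr)$. I then apply the scaled form of \eqref{ELSI}, namely $\int w^2\log(w^2/\|w\|_2^2)\le\frac{\sigma}{\pi}\|\nabla w\|_2^2+C_\sigma\|w\|_2^2$ with $\sigma$ small. Together with the $\log_-$ part, which is controlled by a moment of $\phi_n^2$, this gives $\le \frac{\sigma}{\pi}\|\nabla\phi_n\|_2^2+\eta\int|x|^2\phi_n^2+C_{\sigma,\eta}$.

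\emph{Step 4: conclusion.} Collecting Steps 1--3 yields
\[
C\varepsilon_n^2\ge\delta[u_n]\ge\varepsilon_n^2\Bigl((1-\sigma)\|\nabla\phi_n\|_2^2+(\pi^2-\eta)\int|x|^2\phi_n^2-C\Bigr).
\]
This gives uniform bounds on $\|\nabla\phi_n\|_2$ and on $\int|x|^2\phi_n^2$. Hence $(\phi_n)_n$ is bounded in $H^1(\R^N)$ and tight, and Rellich--Kondrachov on balls plus tightness gives $\phi_n\to\phi$ in $L^2(\R^N)$ up to subsequences, with $\phi\in H^1$ by weak lower semicontinuity. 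Since $\gamma,\partial_i\gamma\in L^2$, passing to the limit in \eqref{eq:phi_ortT} gives \eqref{eq:lim_ortM}.
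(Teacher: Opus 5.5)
There is a genuine gap, and it lies in your treatment of $A_n^c$. Your Step 1, the cancellation of the zero- and first-order terms, and the Taylor bound on $A_n$ are fine. Using $d(u_n,\M)\le\varepsilon_n$ to get $\delta[u_n]\le C\varepsilon_n^2$ is even slightly cleaner than expanding the distance.

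On $A_n^c$ you use $F(u_n)\le 4\varepsilon_n^2\phi_n^2\log_+(4\varepsilon_n^2\phi_n^2)$ and $|F(\gamma)|+|F'(\gamma)\varepsilon_n\phi_n|\le C\varepsilon_n^2\phi_n^2(1+|x|^2)$. These put the quadratic weight on the \emph{wrong} side. The lower bound for $\delta[u_n]/\varepsilon_n^2$ acquires a term $-C\int_{A_n^c}|x|^2\phi_n^2\,dx$ with $C$ of order $\pi^2$, and nothing absorbs it. The favourable $\pi^2|x|^2\phi_n^2$ is produced only on $A_n$, whereas for large $|x|$ a point typically lies in $A_n^c$, because $\gamma=e^{-\frac\pi2|x|^2}$ is tiny there.

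So the Step 4 inequality with $(\pi^2-\eta)\int_{\R^N}|x|^2\phi_n^2$ does not follow. Your Step 3 has the same defect, since the log-Sobolev route produces $\eta\int_{\R^N}|x|^2\phi_n^2$, which would have to be absorbed globally.

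In fact the global moment bound you aim for is false under the lemma's hypotheses. Take $\phi_n$ proportional to $\psi+a_n\tau$, with the following choices:
\begin{itemize}
\item $\psi\in C_c^\infty(\R^N)$;
\item $\tau\in H^1(\R^N)$ smooth with $\tau(x)=|x|^{-(N+1)/2}$ for $|x|$ large;
\item both $\psi$ and $\tau$ orthogonal to $\gamma$ and the $\partial_i\gamma$;
\item $a_n^2=1/|\log\varepsilon_n|$.
\end{itemize}
Then $\int_{\R^N}|x|^2\phi_n^2\,dx=\infty$ for every $n$. Yet $d(u_n,\M)=\varepsilon_n$ for large $n$, and the tail costs only $O(a_n^2|\log\varepsilon_n|)=O(1)$ in $\delta[u_n]/\varepsilon_n^2$ (check with the identity below), so $\sup_nQ(u_n)<\infty$.

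The missing idea is the exact structure of the remainder on $A_n^c$. Set $z=\varepsilon_n\phi_n/\gamma$ and $g(z)=\frac{(1+z)^2\log(1+z)^2-2z}{z^2}$. Then $-R_n/\varepsilon_n^2=\phi_n^2\bigl(\pi|x|^2-g(z)\bigr)$, and $g(z)\le 2\log|z|+C$ for $|z|\ge\frac12$. Since $2\log|z|=\log(\varepsilon_n^2\phi_n^2)+\pi|x|^2$, the $|x|^2$ terms cancel \emph{exactly}, leaving
\[
-\frac{R_n}{\varepsilon_n^2}\ge 2|\log\varepsilon_n|\,\phi_n^2-\phi_n^2\log_+\phi_n^2-C\phi_n^2\qquad\text{on }A_n^c .
\]
Your bound $F(u_n)\le F_+(2\varepsilon_n\phi_n)$ throws away precisely the $\log\gamma^2$ hidden in $\log u_n^2$ that produces this cancellation. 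It also throws away the large positive $|\log\varepsilon_n|$ term.

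Next, control $\int\phi_n^2\log_+\phi_n^2$ by Gagliardo--Nirenberg as $K\|\nabla\phi_n\|_2^{N\delta/2}$ with $N\delta/2<2$; no moment is involved. This gives three things:
\begin{itemize}
\item the $H^1$ bound on $(\phi_n)_n$;
\item $\sup_n\int_{A_n}|x|^2\phi_n^2<\infty$;
\item $\int_{A_n^c}\phi_n^2=O(1/|\log\varepsilon_n|)\to0$.
\end{itemize}
Tightness then follows by splitting $B_R^c$ into $B_R^c\cap A_n$, handled by the moment, and $A_n^c$, handled by the vanishing mass. It does not come from a global second moment.
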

 	\begin{proof}
 	Observe that, by  the definition of $u_n$, $\varepsilon_n\to0$, \eqref{eq:phiL2=1} and  \eqref{eq:phi_ortT}, 
 	\begin{equation}
 	\label{eq:splitun1}
 	\begin{split}
 		\int_{\R^N}|\nabla u_n|^2\,dx&\,=\int_{\R^N}|\nabla \gamma|^2\,dx+2\varepsilon_n\int_{\R^N}\nabla\gamma\cdot\nabla \phi_n+\varepsilon_n^2\int_{\R^N}|\nabla\phi_n|^2\,dx\\
 		\int_{\R^N}|u_n|^2\,dx&\,=\int_{\R^N}\gamma^2\,dx+\varepsilon_n\int_{\R^N}\phi_n^2\,dx=1+\varepsilon_n^2\\
 		\|u_n\|_2^2\log\|u_n\|_2^2&\,=(1+\varepsilon_n^2)\log(1+\varepsilon_n^2)=\varepsilon_n^2+o(\varepsilon_n^2)\,,
 	\end{split}
 	\end{equation}
 	and, since \eqref{eq:phi_ortT} means that $\phi_n$ is orthogonal to the tangent space $T_\gamma\M$ to $\M$ at $\gamma$, it is also easy to see that
 	\[
 	d(u_n,\M)^2=d(u_n,\gamma)^2+o(\varepsilon_n^2)=\varepsilon_n^2\|\phi_n\|_2^2+o(\varepsilon_n^2)=\varepsilon_n^2+o(\varepsilon_n^2)\,.
 	\]
 	Now, setting $h(s):=s^2\log s^2$ and recalling that, by the optimality of $\gamma$ in \eqref{ELSI}, it holds
 	\[
 	\int_{\R^N}|\nabla\gamma|^2\,dx-N\pi\int_{\R^N}\gamma^2\,dx=\pi\int_{\R^N}h(\gamma)\,dx=\pi\int_{\R^N}\gamma^2\log\gamma^2\,dx
 	\]
 	and
 	\[
 	\begin{split}
 	2\int_{\R^N}\nabla\gamma\cdot\nabla w&\,=\pi\int_{\R^N}h'(\gamma)w\,dx\\
 	&\,=2\pi\int_{\R^N}\gamma(\log\gamma^2+1)w\,dx=2\pi\int_{\R^N}w\gamma \log\gamma^2\,dx\qquad\forall w\in \left(T_\gamma\M\right)^\perp,
 	\end{split}
 	\]
 	combining the above identities we can rewrite, as $n\to\infty$,
 	\begin{equation}
 		\label{eq:espQ}
 	\begin{split}
 	Q(u_n)&\,=\frac{\varepsilon_n^2\|\nabla\phi_n\|_2^2-\pi(N-1)\varepsilon_n^2-\pi\int_{\R^N}R_{n}(x)\,dx+o(\varepsilon_n^2)}{\varepsilon_n^2+o(\varepsilon_n^2)}\\
 	&\,=(1+o(1))\left(\|\nabla\phi_n\|_2^2-\pi\int_{\R^N}\frac{R_{n}(x)}{\varepsilon_n^2}\,dx\right)-\pi(N-1)+o(1)\,,
 	\end{split}
 	\end{equation}
 	where
 	\begin{equation}
 	\label{eq:Rn}
 	R_{n}(x):=h(\gamma(x)+\varepsilon_n\phi_n(x))-h(\gamma(x))-\varepsilon_nh'(\gamma(x))\phi_n(x)\,.
 	\end{equation}
 	First, we study the behaviour of $R_n$ on the set
 	\begin{equation}
 		\label{eq:An}
 	A_n:=\left\{x\in\R^N\,:\,|\varepsilon_n\phi_n(x)|\leq\gamma(x)/2\right\}.
 	\end{equation}
 	Since $\gamma(x)/2\leq u_n(x)\leq3\gamma(x)/2$ for every $x\in A_n$, 
 	\[
 	\begin{split}
 	h''(\gamma(x)+t\varepsilon_n\phi_n(x))=&\,2\log((\gamma(x)+t\varepsilon_n\phi_n(x))^2)+6\\
 	= &\,4\log\left|\gamma(x)+t\varepsilon_n\phi_n(x)\right|+6
 	\leq 4\log\frac{3\gamma(x)}{2}+6=-2\pi|x|^2+4\log\frac32+6
 	\end{split}
 	\]
 	for every $t\in[0,1]$, every $x\in A_n$ and every $n\in\N$. Hence, by Taylor's formula with integral remainder, for every $x\in A_n$ we have
 	\[
 	R_n(x)=\varepsilon_n^2\phi_n(x)^2\int_0^1(1-t)h''(\gamma(x)+t\varepsilon_n\phi_n(x))\,dt\leq\varepsilon_n^2\phi_n(x)^2\left(-\pi|x|^2+2\log\frac32+3\right)
 	\]
 	that is
 	\begin{equation}
 		\label{eq:Rn_An}
 	-\frac{R_n(x)}{\varepsilon_n^2}\geq\phi_n(x)^2\left(\pi|x|^2-2\log\frac32-3\right)\,.
 	\end{equation}
 	Hence, 
 	\begin{equation}
 	\label{eq:intAn}
 	\begin{split}
 	-\int_{A_n}\frac{R_n(x)}{\varepsilon_n^2}\,dx&\,\geq\int_{A_n}\phi_n(x)^2\left(\pi|x|^2-2\log\frac32-3\right)\,dx\\
 	&\,\geq\pi\int_{A_n}|x|^2\phi_n(x)^2\,dx-\left(2\log\frac32+3\right)\int_{\R^N}\phi_n^2\,dx\\
 	&\,=\pi\int_{A_n}|x|^2\phi_n(x)^2\,dx -2\log\frac32-3
 	\end{split}
 	\end{equation}
 	uniformly in $n\in\N$. Note that the last integral is well defined for every $n$ because 
 	\[
 	\int_{A_n}|x|^2\phi_n(x)^2\,dx\leq\frac1{4\varepsilon_n^2}\int_{A_n}|x|^2\gamma(x)^2\,dx<\infty\,.
 	\]
 	Now we turn to $A_n^c$. Setting
 	\[
 	z(x):=\frac{\varepsilon_n\phi_n(x)}{\gamma(x)}\,,
 	\]
 	for every $x\in A_n^c$ it holds
 	\[
 	|z(x)|\geq \frac12\,.
 	\]
 	Moreover, using in particular that $\phi_n(x)\neq0$ on $A_n^c$, we can rewrite $\displaystyle R_n(x)/\varepsilon_n^2$ in terms of $z(x)$ as follows
 	\begin{equation}
 		\label{eq:R_Anc}
 	\begin{split}
 		-\frac{R_n(x)}{\varepsilon_n^2}=&\,-\frac{(\gamma+\varepsilon_n\phi_n)^2\log(\gamma+\varepsilon_n\phi_n)^2-\gamma^2\log\gamma^2-2\varepsilon_n\gamma\phi_n\left(\log\gamma^2+1\right)}{\varepsilon_n^2}\\
 		=&\,-\frac{\gamma^2\log\left(1+\frac{\varepsilon_n\phi_n}{\gamma}\right)^2+2\varepsilon_n\gamma\phi_n\log\left(1+\frac{\varepsilon_n\phi_n}{\gamma}\right)^2+\varepsilon_n^2\phi_n^2\log(\gamma+\varepsilon_n\phi_n)^2-2\varepsilon_n\gamma\phi_n}{\varepsilon_n^2}\\
 		=&-\phi_n^2\left(\frac{\log\left(1+z\right)^2}{z^2}+\frac{2\log\left(1+z\right)^2}{z}+\log\left(\gamma(1+z)\right)^2-\frac2z\right)\\
 		=&\,-\phi_n^2\left(\log(1+z)^2\left(\frac1{z^2}+\frac2z+1\right)+\log\gamma^2-\frac2z\right)\\
 		=&\,-\phi_n\left(-\pi|x|^2+\frac{(z+1)^2\log(1+z)^2}{z^2}-\frac2z\right)=\phi_n^2\left(\pi|x|^2-g(z(x))\right)
 	\end{split}
 \end{equation}
 	with
 	\[
 	g(z):=\frac{(1+z)^2\log(1+z)^2-2z}{z^2}\,.
 	\]
 	Note that there exists $C>0$ independent of $z$ such that
 	\begin{equation}
 	\label{eq:estg}
 	g(z)\leq2\log|z|+C\qquad\forall |z|>\frac12
 	\end{equation}
 	with the constant $2$ in front of $\log|z|$ being sharp. This is readily seen by writing
 	\[
 	g(z)-2\log|z|=2\left(\frac2z+\frac1{z^2}\right)\log|z|+2\left(1+\frac1z\right)^2\log\left|1+\frac1z\right|-\frac2z
 	\]
 	and observing that the right hand side is a continuous function on $|z|\geq1/2$ that tends to $0$ as $|z|\to\infty$. Combining \eqref{eq:R_Anc} and \eqref{eq:estg} then yields
 	\begin{equation}
 		\label{eq:Rn_Anc}
 	\begin{split}
 	-\frac{R_n(x)}{\varepsilon_n^2}&\,\geq \phi_n(x)^2\left(\pi|x|^2-2\log\left|\frac{\varepsilon_n\phi_n(x)}{\gamma(x)}\right|-C\right)\\
 	&\,=\phi_n(x)^2\left(-2\log\varepsilon_n-2\log|\phi_n(x)|-C\right)\\
 	&\,\geq2|\log\varepsilon_n|\phi_n(x)^2-2\phi_n(x)^2\log_+|\phi_n(x)|-C\phi_n(x)^2\,,
 	\end{split}
 	\end{equation}
 	where we used that $\varepsilon_n\in(0,1)$ for every $n\in\N$. Hence, 
 	\begin{equation}
 		\label{eq:intAnc}
 	-\int_{A_n^c}\frac{R_n(x)}{\varepsilon_n^2}\,dx\geq2|\log\varepsilon_n|\int_{A_n^c}\phi_n(x)^2\,dx -2\int_{\R^N}\phi_n^2\left(\log_+|\phi_n|+\frac C2\right)\,dx\,.
 	\end{equation}
 	However, since for every $\delta>0$ there exists $C_\delta>0$ such that 
 	\begin{equation}
 	\label{eq:log<pot}
 	\phi_n(x)^2\log_+|\phi_n(x)|\leq C_\delta |\phi_n(x)|^{2+\delta}\qquad\forall x\in \R^N
 	\end{equation}
 	for every $n$, by the Gagliardo-Nirenberg inequality
 	\begin{equation}
 		\label{eq:GN}
 		\|w\|_p^p\leq K_p\|w\|_2^{p- N\left(\frac p2-1\right)}\|\nabla w\|_2^{N\left(\frac p2-1\right)}\qquad\forall w\in H^1(\mathbb{R}^N)
 	\end{equation}
 	one has
 	\begin{equation}
 		\label{eq:pospartGN}
 	\int_{\R^N}\phi_n^2\log_+|\phi_n|\,dx\leq K_\delta\|\phi_n\|_2^{2+\delta-\frac{N\delta}2}\|\nabla\phi_n\|_2^{\frac{N\delta}{2}}=K_\delta \|\nabla\phi_n\|_2^{\frac{N\delta}{2}}
 	\end{equation}
 	for a suitable constant $K_\delta>0$ depending on $\delta$ only. Hence, fixing $\delta$ such that $N\delta/2<2$ and combining \eqref{eq:espQ}, \eqref{eq:intAn} and \eqref{eq:intAnc} leads to
 	\[
 	\begin{split}
 	Q(u_n)+o(1)\geq&\, (1+o(1))\|\nabla\phi_n\|_2^2-\pi(N-1)-\pi\left(2\log\frac32+3+C\right)\\
 	&+(1+o(1))\left(\pi^2\int_{A_n}|x|^2\phi_n(x)^2\,dx+2\pi|\log\varepsilon_n|\int_{A_n^c}\phi_n(x)^2\,dx-2\pi K_\delta\|\nabla\phi_n\|_2^{\frac{N\delta}{2}}\right).
 	\end{split}
 	\]
 	The uniform boundedness of $Q(u_n)$ and the fact that $\varepsilon_n\to0$ as $n\to\infty$ then yields the boundedness of $(\phi_n)_n$ in $H^1(\R^N)$ and 
 	\begin{equation}
 		\label{eq:An_Anc}
 		\sup_n\int_{A_n}|x|^2\phi_n(x)^2\,dx<\infty\qquad\text{and}\qquad\lim_{n}\int_{A_n^c}\phi_n(x)^2\,dx=0\,.
 	\end{equation}
 	Up to subsequences, $\phi_n\rightharpoonup \phi$ in $H^1(\R^N)$ as $n\to\infty$, for some $\phi\in H^1(\R^N)$, and to complete the proof we are left to show that the convergence is strong in $L^2(\R^N)$ (the fact that $\phi\not\equiv0$ and that it satisfies \eqref{eq:lim_ortM} will then directly follow by \eqref{eq:phiL2=1} and \eqref{eq:phi_ortT}). 
 	
 	To this end, it is enough to show that $(\phi_n^2)_n$ is tight in $L^1(\R^N)$, i.e. that for every $\eta>0$ there exists $R>0$ such that 
 	\begin{equation}
 	\label{eq:tight}
 	\sup_n\int_{B_R^c}\phi_n(x)^2\,dx<\eta\,.
 	\end{equation}
 	Now, by the first estimate in \eqref{eq:An_Anc}, for any fixed $R>0$ we have
 	\[
 	\int_{B_R^c\cap A_n}\phi_n(x)^2\,dx\leq\frac1{R^2}\int_{B_R^c\cap A_n}|x|^2\phi_n(x)^2\,dx\leq\frac C{R^2}
 	\]
 	uniformly in $n$, for some $C>0$. Hence, for every $\eta>0$ there exists $R>0$ such that
 	\begin{equation}
 	\label{eq:tightAn}
 	\sup_n\int_{B_R^c\cap A_n}\phi_n(x)^2\,dx<\frac\eta2\,.
 	\end{equation}
 	Conversely, assume by contradiction that there exists $\overline\eta>0$ so that for every $R>0$ one has
 	\[
 	\sup_n\int_{B_R^c\cap A_n^c}\phi_n(x)^2\,dx>\frac{\overline{\eta}}2\,.
 	\]
 	Then there exists a sequence $R_k\to\infty$ and a subsequence $(\phi_{n_k})_k$ for which 
 	\[
 	\liminf_k\int_{B_R^c\cap A_n^c}\phi_{n_k}(x)^2\,dx\geq\frac{\overline\eta}{2}
 	\]
 	which is impossible by the second estimate in \eqref{eq:An_Anc}. Hence, for every $\eta>0$ there exists $R>0$ so that
 	\[
 	\sup_n\int_{B_R^c\cap A_n^c}\phi_n(x)^2\,dx<\frac\eta2\,,
 	\]
 	that together with \eqref{eq:tightAn} gives \eqref{eq:tight} and concludes the proof.
 	\end{proof}
 
 \begin{proof}[Proof of Proposition \ref{prop:BE}]
 	Let $(u_n)_n\subset H^1(\R^N)$ satisfy \eqref{eq:hyp_BE}. For every $n$, by Remark \ref{rem:d_assunta} we have $\displaystyle d(u_n,\M)=d(u_n, a_n\gamma(\cdot-b_n))$ with $a_n^2=m(u_n)$ and for some $b_n\in\R^N$. Since $0\leq m(u_n)\leq \|u_n\|_2^2$ by definition, $(a_n^2)_n$ is bounded from above. Moreover, since by assumption $d(u_n,\M)\to0$ as $n\to\infty$, we also have that $(a_n^2)_n$ is bounded away from zero, since if this were not the case by \eqref{eq:hyp_BE} it would follow $d(u_n,\M)^2=\|u_n\|_2^2-m(u_n)=\|u_n\|_2^2+o(1)>0$ for large $n$. By the invariance of $Q$ under translations and multiplication by a constant, with no loss of generality (and possibly up to subsequences) we can thus assume $a_n=1$ and $b_n=0$ for every $n$, so that \eqref{eq:hyp_BE} remains valid and $d(u_n,\M)=d(u_n,\gamma)$ for every $n$. Therefore, there exist $\varepsilon_n\to0$ and $(\phi_n)_n\subset H^1(\R^N)\cap\left(T_\gamma\M\right)^\perp$ such that $\|\phi_n\|_2=1$ for every $n$ and 
 	\[
 	u_n=\gamma+\varepsilon_n\phi_n\qquad\forall n.
 	\]
 	By \eqref{eq:hyp_BE} and Lemma \ref{lem:boundBE}, $(\phi_n)_n$ is bounded in $H^1(\R^N)$ and, up to subsequences, $\phi_n\rightharpoonup\phi$ in $H^1(\R^N)$ and $\phi_n\to\phi$ in $L^2(\R^N)$ as $n\to\infty$, for some $\phi\in H^1(\R^N)\cap\left(T_\gamma\M\right)^\perp$ with $\|\phi\|_2=1$. Recalling the expansion in the first line of \eqref{eq:espQ} and noting that, by weak lower semicontinuity,
 	\begin{equation}
 		\label{eq:phintophi1}
 		\liminf_n\frac{\varepsilon_n^2\|\nabla \phi_n\|_2^2-\pi(N-1)\varepsilon_n^2}{\varepsilon_n^2+o(\varepsilon_n^2)}\geq\|\nabla\phi\|_2^2-\pi(N-1)\,,
 	\end{equation}
 	we need to take into account the term $\displaystyle-\pi\int_{\R^N}R_n(x)/\varepsilon_n^2\,dx$, with $R_n$ as in \eqref{eq:Rn}. 
 	
 	We consider first the set $B_R^c$ for fixed $\displaystyle R>\overline R:=\sqrt{\frac1\pi\left(2\log\frac32+3\right)}$. By \eqref{eq:Rn_An}, it holds
 	\begin{equation}
 	\label{eq:Rn>0}
 	-\frac{R_n(x)}{\varepsilon_n^2}\geq0\qquad\forall x\in A_n\cap B_R^c
 	\end{equation}
 	for every $n$, where $A_n$ is the set defined in \eqref{eq:An}. Conversely, by \eqref{eq:Rn_Anc} it follows 
 	\begin{equation}
 	\label{eq:Rn_code2}
 	-\frac{R_n(x)}{\varepsilon_n^2}\geq-\phi_n^2(x)\left(\log_+|\phi_n(x)|+C\right)\qquad\forall n\in A_n^c\cap B_R^c\,.
 	\end{equation}
 	Observe however that
 	\begin{equation}
 		\label{eq:unif1}
 		\lim_{R\to\infty}\limsup_n\int_{B_R^c}\phi_n^2(x)\left(\log_+|\phi_n(x)|+C\right)\,dx=0\,.
 	\end{equation}
 	Indeed, since $(\phi_n)_n$ is bounded in $H^1(\R^N)$ and $\phi_n\to\phi$ in $L^2(\R^N)$, then $\phi_n\to\phi$ in $L^p(\R^N)$ for every $p\geq2$ if $N=1,2$ and every $p\in[2,2^*)$ if $N\geq3$. Fixing any such $p$, by \eqref{eq:pospartGN} there exists $C_p>0$ such that 
 	\[
 	\int_{B_R^c}\phi_n^2(x)\log_+|\phi_n(x)|\,dx\leq C_p\int_{B_R^c}|\phi_n|^p\,dx\,,
 	\]
 	and by the strong convergence of $\phi_n$ to $\phi$ in $L^p(\R^N)$
 	\[
 	\|\phi_n\|_{L^p(B_R^c)}\leq\|\phi_n-\phi\|_{L^p(B_R^c)}+\|\phi\|_{L^p(B_R^c)}=\|\phi\|_{L^p(B_R^c)}+o(1)\qquad\text{as }n\to\infty\,,
 	\]
 	thus yielding
 	\[
 	\lim_{R\to\infty}\limsup_n\int_{B_R^c}\phi_n^2(x)\log_+|\phi_n(x)|\,dx=0\,.
 	\]
 	Since the term $\displaystyle\int_{B_R^c}\phi_n^2\,dx$ can be handled in the same way, we obtain \eqref{eq:unif1}. 
 	Combining \eqref{eq:Rn>0}, \eqref{eq:Rn_code2} and \eqref{eq:unif1} we have
 	\begin{equation}
 		\label{eq:code}
 		\lim_{R\to\infty}\liminf_n\int_{B_R^c}-\frac{R_n(x)}{\varepsilon_n^2}\,dx\geq0\,.
 	\end{equation} 
 	Now, on $B_R$ we prove that
 	\begin{equation}
 		\label{eq:Rn_BR}
 		\lim_n\int_{B_R}-\frac{R_n(x)}{\varepsilon_n^2}\,dx=\int_{B_R}(\pi|x|^2-3)\phi(x)^2\,dx\,.
 	\end{equation}
 	To this end, note first that \eqref{eq:R_Anc} and the fact that there exists $M>0$ such that $g(z)\geq-M$ for every $\displaystyle|z|\geq\frac12$ give
 	\[
 	-\frac{R_n(x)}{\varepsilon_n}\leq \phi_n(x)^2(\pi|x|^2+M)\qquad\forall x\in A_n^c\,,
 	\]
 	that together with \eqref{eq:Rn_code2} yields
 	 \[
 	\left|\frac{R_n(x)}{\varepsilon_n^2}\right|\leq C_R\phi_n(x)^2\left(\log_+|\phi_n(x)|+1\right)\qquad\forall x\in B_R\cap A_n^c
 	\]
 	for a suitable constant $C_R>0$ depending on $R$ only. Hence, by \eqref{eq:log<pot} and H\"older inequality,
 	\begin{equation}
 		\label{eq:int<mis}
 	\begin{split}
 	\int_{B_R\cap A_n^c}\left|\frac{R_n(x)}{\varepsilon_n^2}\right|\,dx&\,\leq C_{R,\delta}\int_{B_R\cap A_n^c}\phi_n^2+|\phi_n|^{2+\delta}\,dx\\
 	&\,\leq C_{R,\delta}\left(\left|B_R\cap A_n^c\right|^{\alpha_1}+\left|B_R\cap A_n^c\right|^{\alpha_2}\right)\|\phi_n\|_{L^{2+\tilde\delta}(\R^N)}^\beta\\
 	&\,\leq  C_{R,\delta}'\left(\left|B_R\cap A_n^c\right|^{\alpha_1}+\left|B_R\cap A_n^c\right|^{\alpha_2}\right)
 	\end{split}
 	\end{equation}
 	for suitable $0<\delta<\tilde\delta$, $\alpha_1,\alpha_2,\beta>0$ and constants $C_{R,\delta}, C_{R,\delta}'>0$ depending on $R$ and $\delta$ but not on $n$. This immediately ensures that $\displaystyle \int_{B_R\cap A_n^c}\left|\frac{R_n(x)}{\varepsilon_n^2}\right|\,dx$ is bounded uniformly in $n$. Furthermore, since on $B_R\cap A_n^c$ we have $\displaystyle |\phi_n(x)|\geq\frac{\gamma(x)}{2\varepsilon_n}\geq\frac{e^{-\pi R^2/2}}{2\varepsilon_n}$,
 	\[
 	1\geq\int_{B_R\cap A_n^c}|\phi_n|^2\,dx\geq \frac{e^{-\pi R^2}}{4\varepsilon_n^2}\left|B_R\cap A_n^c\right|\,,
 	\] 
 	which shows that $\displaystyle\left|B_R\cap A_n^c\right|\to0$ as $n\to\infty$. Coupling with \eqref{eq:int<mis} entails
 	\begin{equation}
 		\label{eq:negl}
 		\lim_n\int_{B_R\cap A_n^c}\left|\frac{R_n(x)}{\varepsilon_n^2}\right|\,dx=0\,.
 	\end{equation}
	On the contrary, on $B_R\cap A_n$ we have $\varepsilon_n|\phi_n|\leq\gamma/2$ and $\|\phi_n\|_2=1$ for every $n$, so  $\varepsilon_n\phi_n\to0$ in $L^2(B_R)$, and thus also a.e. on $B_R$ up to subsequences. All in all, exploiting once more Taylor's formula with integral remainder for $h(s)=s^2\log s^2$
	\[
	-\frac{R_n(x)}{\varepsilon_n^2}=-\phi_n(x)^2\int_0^1(1-t)h''(\gamma(x)+t\varepsilon_n\phi_n(x))\,dt\,,
	\]
	since $\gamma$ is bounded away from zero on $B_R$ we have that the integral term on the right hand side is uniformly bounded on $B_R\cap A_n$ and for a.e. $x$ in this set
	\[
	-\int_0^1(1-t)h''(\gamma(x)+t\varepsilon_n\phi_n(x))\,dt\to -\frac12 h''(\gamma(x))=\pi|x|^2-3\qquad\text{as }n\to\infty\,.
	\]
	Together with the strong convergence in $L^2(\R^N)$ of $\phi_n$ to $\phi$ and \eqref{eq:negl} (and possibly passing to a further subsequence), this yields \eqref{eq:Rn_BR}.
	
	We are now in position to complete the proof of Proposition \ref{prop:BE}. Combining \eqref{eq:espQ}, \eqref{eq:phintophi1}, \eqref{eq:code} and \eqref{eq:Rn_BR}, for every $R>0$ and for sufficiently large $n$ we have
	\[
	Q(u_n)\geq \|\nabla\phi\|_2^2-\pi(N-1)+\pi^2\int_{B_R}|x|^2\phi(x)^2\,dx-3\pi\int_{B_R}\phi^2\,dx+\eta(R)+o(1)
	\]
	with $\eta(R)\to0$ as $R\to\infty$, so that taking $R\to\infty$ (and recalling also that $\|\phi\|_2=1$)
	\[
	\liminf_n Q(u_n)\geq \|\nabla\phi\|_2^2+\pi^2\int_{\R^N}|x|^2\phi(x)^2\,dx-\pi(N+2)\,.
	\]
	As a byproduct, this shows that $\displaystyle\int_{\R^N}|x|^2\phi(x)^2\,dx<\infty$. Furthermore, since $\phi\in \left(T_\gamma\M\right)^\perp$ and $\|\phi\|_2=1$, it is well known (see e.g. \cite{thangavelu}) that 
	\[
	\|\nabla\phi\|_2^2+\pi^2\int_{\R^N}|x|^2\phi(x)^2\,dx\geq \pi(N+4)\,,
	\]
	which plugged in the previous estimates gives \eqref{eq:Q>2} and concludes the proof.
 \end{proof}

	\section{Existence of optimizers in $H^1(\R^N)$: proof of Theorem \ref{thm:main}}
	\label{sec:comp}
	
	This section is devoted to complete the proof of Theorem \ref{thm:main}. Since the upper bound $Q<2\pi\log2$ has already been proved in Section \ref{sec:Q<2log2}, we are left to show that $Q$ is attained by some $u\in H^1(\R^N)$. 
	
	To this end, let $(u_n)_n\subset H^1(\R^N)$ be such that 
	\[
	Q(u_n)\to Q\qquad\text{as }n\to\infty\,.
	\]
	By the invariances of $Q(\cdot)$, with no loss of generality we can further assume that, for every $n$,
	\begin{equation}
	\label{eq:norm_un}
	\|u_n\|_2=1\,,\qquad \sup_{j\in\Z^N}\int_{A_j}u_n^2\,dx=\int_{A_0}u_n^2\,dx\,,
	\end{equation}
	where $A_j:=[0,1)^N+j$, for every $j\in\Z^N$.
	
	Since $Q<2\pi\log2$, by \eqref{eq:norm_un}, Remark \ref{rem:d_assunta} and Section \ref{sec:estM} it directly follows that there exists $\alpha>0$ such that 
	\[
	1>d(u_n,\M)\geq\alpha>0\qquad\forall n\,.
	\] 
	Moreover, arguing similarly to the proof of Lemma \ref{lem:boundBE} it is easily seen that, given any $\delta>0$ such that $\displaystyle N\delta/2<2$, 
	\[
	Q(u_n)+o(1)\geq \|\nabla u_n\|_2^2-\pi K_\delta\|\nabla u_n\|_2^{\frac{N\delta}{2}}-N\pi
	\]
	as $n\to\infty$, which shows that $(u_n)_n$ is bounded in $H^1(\R^N)$. Up to subsequences (not relabeled), we then have that $u_n\rightharpoonup u$ in $H^1(\R^N)$ as $n\to\infty$. 
	
	We first prove that $u\not\equiv0$. Indeed, if it were $u\equiv0$, then by strong convergence in $L_{loc}^2(\R^N)$ we would have $\displaystyle\lim_n \int_{A_0}u_n^2\,dx=0$, in turn yielding $u_n\to0$ in $L^p(\R^N)$ for every $p\in(2,2^*)$ if $N\geq3$ and every $p>2$ if $N=1,2$ by the standard vanishing lemma based on local Gagliardo--Nirenberg estimates on the cubes $A_j$. Hence, recalling once more that $u_n^2\log_+ u_n^2\leq C_p|u_n|^p$ on $\R^N$, for a suitable constant $C_p>0$ depending only on $p>2$, we obtain
	\begin{equation}
		\label{eq:pospart}
	\lim_n\int_{\R^N}u_n^2\log_+ u_n^2\,dx=0\,.
	\end{equation}
	On the other hand, for every fixed $\delta\in (0,1)$, 
	\begin{equation}
	\label{eq:negpart}
	\int_{\R^N}u_n^2\log_- u_n^2\,dx=\int_{\left\{|u_n|\leq1\right\}}u_n^2\log u_n^2\,dx\leq \int_{\left\{|u_n|\leq\delta\right\}}u_n^2\log u_n^2\,dx\leq \log\delta^2\int_{\left\{|u_n|\leq\delta\right\}}u_n^2\,dx\,.
	\end{equation}
	Observe now that, if there were any $\overline\delta\in(0,1)$ for which $\displaystyle \int_{\left\{|u_n|\leq\overline{\delta}\right\}}u_n^2\,dx<\frac12$ for every $n$, then we would have $\displaystyle\int_{\left\{|u_n|\geq\overline\delta\right\}}u_n^2\,dx\geq\frac12$ for every $n$. This is however impossible, since for every $p>2$ there exists $C_{p,\overline{\delta}}>0$ depending only on $p$ such that $s^p\geq C_{p,\overline\delta}s^2$ whenever $s\geq\overline\delta$, so that $\displaystyle\int_{\left\{|u_n|\geq\overline\delta\right\}}u_n^p\,dx\geq C_{p,\overline\delta}\int_{\left\{|u_n|\geq\overline\delta\right\}}u_n^2\,dx\geq\frac12$ for any given $p\in(2,2^*)$ if $N\geq3$ or $p>2$ if $N=1,2$, contradicting the strong convergence of $u_n$ to $0$ in $L^p(\R^N)$. Therefore, for every $\delta\in(0,1)$ there exists $n_\delta\in\N$ such that
	\[
	\int_{\left\{|u_{n_\delta}|\leq\overline{\delta}\right\}}u_{n_\delta}^2\,dx\geq\frac12
	\]
	and $n_\delta\to+\infty$ as $\delta\to0$. Combining with \eqref{eq:pospart} and \eqref{eq:negpart} this entails (at least along a subsequence)
	\[
	\lim_n\int_{\R^N}u_n^2\log u_n^2\,dx=-\infty\,,
	\]
	which is impossible since $Q(u_n)\to Q$, $(u_n)_n$ is bounded in $H^1(\R^N)$ and $\|u_n\|_2=1$ for every $n$. Hence, $u\not\equiv0$ on $\R^N$ and, by weak lower semicontinuity, 
	\begin{equation}
		\label{eq:semicont}
		0<\|u\|_2\leq\liminf_n\|u_n\|_2\,.
	\end{equation}
	
	\begin{proposition}
		\label{prop:convL2}
		If $u_n\to u$ in $L^2(\R^N)$ as $n\to+\infty$, then $Q(u)=Q$.
	\end{proposition}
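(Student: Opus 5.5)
Since $u_n\to u$ in $L^2(\R^N)$ and $\|u_n\|_2=1$, we get $\|u\|_2=1$. We already know that $u_n\rightharpoonup u$ in $H^1(\R^N)$ with $(u_n)_n$ bounded and $d(u_n,\M)\geq\alpha>0$. The plan is to show
\[
\delta[u]\leq\liminf_n\delta[u_n]\qquad\text{and}\qquad d(u,\M)^2=\lim_n d(u_n,\M)^2\geq\alpha^2>0 .
\]
Then $u\notin\M$, and $Q(u)\leq\liminf_n Q(u_n)=Q$. The opposite inequality $Q(u)\geq Q$ holds by the definition of $Q$, so equality follows.

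\emph{Distance term.} By Remark \ref{rem:d_assunta}, $d(u_n,\M)^2=\|u_n\|_2^2-m(u_n)$. The map $w\mapsto m(w)=\sup_b\big(\int w\,\gamma(\cdot-b)\big)^2$ is Lipschitz on bounded sets of $L^2$: each functional $w\mapsto\int w\gamma(\cdot-b)$ is $1$-Lipschitz, since $\|\gamma\|_2=1$, uniformly in $b$. Hence $m(u_n)\to m(u)$ and $d(u_n,\M)\to d(u,\M)\geq\alpha$.

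\emph{Deficit.} The gradient term is weakly lower semicontinuous, and $\|u_n\|_2^2\log\|u_n\|_2^2=0=\|u\|_2^2\log\|u\|_2^2$. It remains to show $\int u_n^2\log u_n^2\to\int u^2\log u^2$. For the positive part, $(u_n)$ is bounded in $H^1$ and converges in $L^2$, so it converges in $L^p$ for $p\in(2,2^*)$ (every $p>2$ if $N=1,2$) by interpolation. Together with the bound $s^2\log_+s^2\leq C_p|s|^p$ and a generalized dominated convergence argument along an a.e. convergent subsequence, this gives convergence of $\int u_n^2\log_+u_n^2$.

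The negative part $\int u_n^2\log_- u_n^2$ is the main obstacle, since $s^2\log s^2$ is not controlled by $s^2$ near $0$ and mass could spread thinly. However, we only need the correct inequality. Because the entropy enters $\delta$ with a minus sign, we need
\[
\limsup_n\int u_n^2\log u_n^2\,dx\leq\int u^2\log u^2\,dx ,
\]
that is, $\liminf_n\int u_n^2\log_-u_n^2\geq\int u^2\log_-u^2$ for the nonnegative integrands $u_n^2|\log u_n^2|\mathbf 1_{\{|u_n|\leq1\}}$. This follows from Fatou's lemma along an a.e. convergent subsequence. Since $\log_-$ here gives a nonpositive contribution, I will be careful with signs: $-\pi\int u^2\log u^2=-\pi\int u^2\log_+u^2+\pi\int u^2|\log_- u^2|$, and Fatou gives $\int u^2|\log_-u^2|\leq\liminf_n\int u_n^2|\log_-u_n^2|$. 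This is exactly the lower semicontinuity of $\delta$ we need, so no tightness control of the tails is required beyond the $L^2$ convergence already assumed.

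Combining, $\delta[u]\leq\liminf_n\delta[u_n]$, and we pass to a subsequence realizing the liminf. The one point to double-check is that $\int u^2|\log u^2|<\infty$, so that $Q(u)$ is well defined. This follows from the same Fatou bound together with the boundedness of $\delta[u_n]$, which holds because $Q(u_n)\to Q<\infty$ and $d(u_n,\M)\leq1$.
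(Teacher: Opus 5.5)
Your proposal is correct and follows the paper's proof essentially step by step. Both arguments use continuity of $d(\cdot,\M)$ under $L^2$ convergence and weak lower semicontinuity of the gradient term, then obtain upper semicontinuity of the entropy by dominated convergence on the $\log_+$ part (via $L^p$ convergence for some $p>2$) and Fatou's lemma on the $\log_-$ part. The only cosmetic differences are that you justify the convergence of the distance through the Lipschitz property of $m$, and you invoke the generalized dominated convergence theorem rather than extracting a single $L^1$ dominating function along a subsequence.
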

	\begin{proof}
		It is enough to show that the strong convergence of $u_n$ to $u$ in $L^2(\R^N)$ implies that
		\begin{equation}
		\label{eq:us_ent}
		\int_{\R^N}u^2\log u^2\,dx\geq\limsup_n 	\int_{\R^N}u_n^2\log u_n^2\,dx\,.
		\end{equation}
	Indeed, if \eqref{eq:us_ent} holds, since the strong convergence in $L^2(\R^N)$ implies $\displaystyle d(u_n,\M)\to d(u,\M)$ (which in turn ensures that $d(u,\M)\geq\alpha>0$, so $u\not\in\M$), by \eqref{eq:us_ent} and the weak lower semicontinuity of $\|\nabla u_n\|_2$ one obtains
	\[
	Q=\lim_n Q(u_n)\geq Q(u)\geq Q\,.
	\]
	We are thus left to prove the validity of \eqref{eq:us_ent}. We write 
	\[
	\int_{\R^N}u_n^2\log u_n^2\,dx=\int_{\R^N}u_n^2\log_+u_n^2\,dx+\int_{\R^N}u_n^2\log_-u_n^2\,dx
	\]
	and observe that the first integral on the right hand side is non negative, while the second one is non-positive. Since $(u_n)_n$ is bounded in $H^1(\R^N)$ and $u_n\to u$ in $L^2(\R^N)$, the Gagliardo-Nirenberg inequality \eqref{eq:GN} implies that $u_n\to u$ in $L^p(\R^N)$ for every $p\in(2,2^*)$ if $N\geq3$ and $p>2$ if $N=1,2$. Hence, given $s>0$ there exists $g\in L^1(\R^N)$ such that (up to subsequences) $|u_n|^{2+s}\leq g$ a.e. on $\R^N$. Since $u_n^2\log_+u_n^2\leq C_s|u_n|^{2+s}$ everywhere on $\R^N$ for a suitable constant $C_s>0$ independent of $n$, we then have that $u_n^2\log_+u_n^2$ is bounded from above a.e. in $\R^N$ by some fixed function  in $L^1(\R^N)$ uniformly on $n$. Since the strong convergence of $u_n$ to $u$ in $L^2(\R^N)$ also implies (again up to subsequences) the pointwise convergence a.e. of $u_n^2\log_+u_n^2$ to $u^2\log_+u^2$, by dominated convergence we obtain
	\begin{equation}
	\label{eq:conv+}
	\int_{\R^N}u_n^2\log_+u_n^2\,dx\to\int_{\R^N}u^2\log_+u^2\,dx\qquad\text{as }n\to+\infty\,.
	\end{equation}
	Similarly, since up to subsequences $u_n^2\log_-u_n^2\to u^2\log_-u^2$ a.e. on $\R^N$ and $u_n^2\log_-u_n^2\leq0$ everywhere, by Fatou's lemma (that can be used here because $(u_n^2\log_-u_n^2)_n$ is uniformly bounded in $L^1(\R^N)$ by the boundedness of $Q(u_n)$ and the boundedness of $(u_n)_n$ in $H^1(\R^N)$)
	\[
	\begin{split}
	\limsup_n\int_{\R^N}u_n^2\log_-u_n^2\,dx=&\,-\liminf_n\int_{\R^N}\left|u_n^2\log_- u_n^2\right|\,dx\\
	\leq&\,-\int_{\R^N}\left|u^2\log_- u^2\right|\,dx=\int_{\R^N}u^2\log_-u^2\,dx\,.
	\end{split}
	\]
	Combining with \eqref{eq:conv+} gives \eqref{eq:us_ent}. Note that, even though the previous argument relies on the extraction of subsequences, since by assumption the whole sequence $u_n$ converges to $u$ in $L^2(\R^N)$, from every subsequence one can extract a further subsequence for which \eqref{eq:us_ent} holds, thus guaranteeing that \eqref{eq:us_ent} is actually true for the whole sequence.
	\end{proof}
	In view of \eqref{eq:semicont} and Proposition \ref{prop:convL2}, to complete the proof of Theorem \ref{thm:main} we need to rule out the case
	\begin{equation}
		\label{eq:assurdo}
		\|u\|_2<\liminf_n\|u_n\|_2\,.
	\end{equation} 
	To do this, we will follow closely the strategy developed by K\"onig in \cite{Konig_JEMS} for the Sobolev inequality. This requires to establish the next lemmas. 
	\begin{lemma}
		\label{lem:m=max}
		As $n\to +\infty$, it holds
		\[
		m(u_n)
		=
		\max\{m(u),m(u_n-u)\}+o(1)
		\]
		where $m(\cdot)$ is the quantity defined in Remark \ref{rem:d_assunta}.
	\end{lemma}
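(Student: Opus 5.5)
The idea is to study the function $F_w(b):=\int_{\R^N}w(x)e^{-\frac\pi2|x-b|^2}\,dx=(w*\gamma)(b)$, so that $m(w)=\|F_w\|_\infty^2$. Set $r_n:=u_n-u$, so that $r_n\rightharpoonup0$ in $H^1(\R^N)$, and $F_{u_n}=F_u+F_{r_n}$. Two facts will be used throughout.

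First, $F_u=u*\gamma$ is continuous and vanishes at infinity, since $u,\gamma\in L^2$. Second, $F_{r_n}\to0$ locally uniformly. For each fixed $b$, weak convergence gives $F_{r_n}(b)\to0$. Moreover
\[
|F_{r_n}(b)-F_{r_n}(b')|\le\|r_n\|_2\,\|\gamma(\cdot-b)-\gamma(\cdot-b')\|_2\le C|b-b'|,
\]
because $(r_n)$ is bounded in $L^2$. This uniform equicontinuity upgrades pointwise convergence to uniform convergence on compact sets.

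For the upper bound, fix $\eta>0$ and choose $R$ such that $|F_u|\le\eta$ on $B_R^c$. By local uniform convergence, $|F_{r_n}|\le\eta$ on $B_R$ for $n$ large. Hence on $B_R$ we have $|F_{u_n}|\le\|F_u\|_\infty+\eta$, and on $B_R^c$ we have $|F_{u_n}|\le\eta+\|F_{r_n}\|_\infty$. Therefore
\[
\|F_{u_n}\|_\infty\le\max\{\|F_u\|_\infty,\|F_{r_n}\|_\infty\}+\eta .
\]

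For the lower bound, let $b_0$ be a maximum point of $|F_u|$, which exists by Remark \ref{rem:d_assunta}. Then $F_{u_n}(b_0)=F_u(b_0)+o(1)$. Next let $b_n$ be a maximum point of $|F_{r_n}|$. If $\limsup_n\|F_{r_n}\|_\infty>0$ along some subsequence, local uniform convergence forces $|b_n|\to\infty$, and then $F_u(b_n)\to0$. This gives $|F_{u_n}(b_n)|=\|F_{r_n}\|_\infty+o(1)$. If instead $\|F_{r_n}\|_\infty\to0$, that branch of the maximum is trivially dominated by the first. Combining the two cases yields
\[
\|F_{u_n}\|_\infty\ge\max\{\|F_u\|_\infty,\|F_{r_n}\|_\infty\}-o(1).
\]

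Squaring is harmless because all these quantities are bounded: $\|F_w\|_\infty\le\|w\|_2$ and $(u_n)$ is bounded in $L^2$. The lemma follows. The only step needing care is the local uniform convergence of $F_{r_n}$, handled above by the Lipschitz estimate; the rest is bookkeeping with subsequences, and since every subsequence admits a further subsequence with the same limit, the conclusion holds for the whole sequence.
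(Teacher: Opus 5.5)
Your proof is correct and follows essentially the same route as the paper. The contribution of $u_n-u$ vanishes for Gaussian centers in bounded sets (by weak convergence), the contribution of $u$ vanishes for centers escaping to infinity (since $u*\gamma\in C_0$), and the lower bound comes from evaluating at maximizers of $m(u)$ and $m(u_n-u)$. Your equi-Lipschitz estimate $|F_{r_n}(b)-F_{r_n}(b')|\le C|b-b'|$ makes explicit the locally uniform convergence that the paper uses implicitly, both when it passes to the limit along $b_n\to b_\infty$ and when it claims $m(u_n-u)=o(1)$ for bounded $\tilde b_n$.
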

	
	\begin{proof}
		By Remark \ref{rem:d_assunta}, let $b_n\in\mathbb{R}^N$ be such that
		\[
		m(u_n)
		=
		\left(\int_{\mathbb{R}^N}
		u_n(x)e^{-\frac{\pi}{2}|x-b_n|^2}\,dx\right)^2.
		\]
		If $(b_n)_n$ is bounded, there exists $b_\infty\in\mathbb R^N$ such that, up to subsequences,
		\[
		b_n\to b_\infty\in\mathbb{R}^N,
		\]
		whence
		\[
		\int_{\mathbb{R}^N}
		(u_n-u)e^{-\frac{\pi}{2}|x-b_n|^2}\,dx\to0\qquad\text{as }n\to\infty
		\]
		and therefore
		\[
		\begin{split}
			m(u_n)
			=&\,
		\left(\int_{\mathbb{R}^N}
			(u_n-u)e^{-\frac{\pi}{2}|x-b_n|^2}\,dx
			+
			\int_{\mathbb{R}^N}
			u e^{-\frac{\pi}{2}|x-b_n|^2}\,dx\right)^2\\
			=&\,\left(
			\int_{\mathbb{R}^N}
			u e^{-\frac{\pi}{2}|x-b_\infty|^2}\,dx\right)^2
			+o(1)
			\le m(u)+o(1).
		\end{split}
		\]
		On the other hand, if $(b_n)_n$ is unbounded, then
		\[
		\int u\,e^{-\frac{\pi}{2}|x-b_n|^2}\,dx\to 0,
		\]
		and arguing as above we obtain
		\[
		m(u_n)\le m(u_n-u)+o(1),
		\]
		which yields
		\[
		m(u_n)\le
		\max\{m(u),m(u_n-u)\}+o(1).
		\]
		Conversely, denoting by $\overline b\in\mathbb{R}^N$ the vector at which $m(u)$ is attained (whose existence is ensured again by Remark \ref{rem:d_assunta}), one has
		\[
		\begin{split}
		m(u)
		=&\,
		\left(\int_{\mathbb{R}^N} u\,e^{-\frac{\pi}{2}|x-\overline b|^2}\,dx\right)^2\\
		=&\,
		\left(\int_{\mathbb{R}^N} (u-u_n)e^{-\frac{\pi}{2}|x-\overline b|^2}\,dx
		+
		\int_{\mathbb{R}^N} u_n e^{-\frac{\pi}{2}|x-\overline b|^2}\,dx\right)^2
		\le m(u_n)+o(1).
		\end{split}
		\]
		Similarly, since
		\[
		m(u_n-u)
		=
		\left(\int (u_n-u)e^{-\frac{\pi}{2}|x-\tilde b_n|^2}\,dx\right)^2
		\]
		for suitable $\tilde b_n$, if $(\tilde b_n)_n$ is unbounded, then arguing as above we obtain
		\[
		m(u_n-u)\le m(u_n)+o(1),
		\]
		whereas, if $(\tilde b_n)_n$ is bounded, one has
		\[
		m(u_n-u)=o(1).
		\]
		Combining the preceding estimates gives the conclusion.
	\end{proof}
\begin{remark}
	\label{rem:BL}
	In the following, we will need identities in the form
	\begin{equation}
		\label{eq:BL}
		\int_{\R^N}\left|h(w_n)-h(w)-h(w_n-w)\right|\,dx=o(1)\qquad\text{as }n\to+\infty
	\end{equation}
	where $h(s):=s^2\log s^2$, $w_n\rightharpoonup w$ in $H^1(\R^N)$ as $n\to\infty$, and $\displaystyle \sup_n \int_{\R^N}|h(w_n)|\,dx<\infty$. Specifically, we will use this relation with $w_n$ given by linear combinations of the minimizing sequence $u_n$ we are considering and of its weak limit $u$ (up to subsequences), i.e.
	\begin{equation}
		\label{eq:genw}
		w_n=A_n u_n+ B_n u
	\end{equation}
	for suitable coefficients $A_n, B_n\in \R$ uniformly bounded in $n$. The validity of \eqref{eq:BL} is a direct application of the general case of the Brezis-Lieb lemma \cite[Theorem 2]{BL2}, of which we now check the hypotheses in our setting. Observe first that $h:\R\to\R$ is a continuous function with $h(0)=0$. Moreover, for every $\varepsilon>0$ there exists $C_\varepsilon>0$ such that
	\begin{equation}
		\label{eq:condBL}
		|h(x+y)-h(x)|\leq \varepsilon\Phi(x)+C_\varepsilon\Phi(y)\qquad\forall x,y\in\R
	\end{equation}
	where $\Phi:\R\to[0,\infty)$ is the convex envelope of the function $\tilde\Phi(s):=s^2\left(|\log s^2|+1\right)$, $s\in\R$. By definition, $\Phi$ is convex on $\R$, even, $\Phi\equiv\tilde\Phi$ on $[0,+\infty)\setminus(a,1)$ for some $a>0$, and both $\Phi$ and $\tilde{\Phi}$ are continuous on $[a,1]$. Hence, there exist two constants $c_1,c_2>0$ such that
	\begin{equation}
		\label{eq:equiPhi}
	c_1\tilde\Phi(s)\leq \Phi(s)\leq c_2\tilde\Phi(s)\qquad\forall s\in\R\,.
	\end{equation}
	To obtain \eqref{eq:condBL} it is then enough to prove the estimate with $\tilde\Phi$ in place of $\Phi$ on the right hand side. This can be done by noting that for every $A>0$ there exists $C_A>0$ such that 
	\begin{equation}
		\label{eq:dilPhi}
		\tilde\Phi(As)\leq C_A\tilde\Phi(s)
	\end{equation}
	(take e.g. $C_A:=A^2(2+|\log A^2|$). For any fixed $\delta>0$, if $|y|\geq\delta|x|$, then $|x+y|\leq (1+\delta^{-1})|x|$ and by \eqref{eq:dilPhi} we have
	\begin{equation}
		\label{eq:estPhi}
		|h(x+y)-h(x)|\leq|h(x+y)|+|h(x)|=|h(|x+y|)|+|h(|x|)|\leq \left(C_{\delta^{-1}}+C_{1+\delta^{-1}}\right)\tilde\Phi(|y|)
	\end{equation}
	where we also used the fact that, by definition, $|h(s)|\leq \tilde\Phi(s)$ for every $s$. Conversely, if $|y|<\delta|x|$, we can write
	\[
	|h(x+y)-h(x)|\leq |y|\sup_{\theta\in[0,1]}|h'(x+\theta y)|\leq \delta|x|\sup_{\theta\in[0,1]}|h'(x+\theta y)|\,,
	\]
	and since $(1-\delta)|x|\leq|x+\theta y|\leq (1+\delta)|x|$ for every $\theta\in(0,1)$ and $h'(s)=2s(\log s^2+1)$, we have
	\[
	\begin{split}
		|h'(x+\theta y)|\leq &\,2|x+\theta y|\left(|\log(x+\theta y)^2|+1\right)\\
		\leq&\, 2(1+\delta)|x|\left(2|\log x^2|+|\log(1-\delta)^2|+|\log(1+\delta)^2|+1\right)\leq 8|x|\left(|\log x^2|+1\right)
	\end{split}
	\]
	for sufficiently small $\delta$. Hence, 
	\[
	|h(x+y)-h(x)|\leq8\delta\tilde\Phi(x)\qquad\text{if }|y|<\delta|x|\,,
	\]
	so that taking $\delta$ small enough and combining with \eqref{eq:equiPhi} and \eqref{eq:estPhi} gives \eqref{eq:condBL}. Therefore, $h(\cdot)$ satisfies \cite[Eq. (3)]{BL2}, so to apply \cite[Theorem 2]{BL2} to the sequence $(w_n)_n$ as in \eqref{eq:genw} we are left to verify that conditions (i)--(iv) of that result are satisfied along such sequence. To see this, recall that $(u_n)_n$ is a minimizing sequence for $Q(\cdot)$ in $H^1(\R^N)$. As such, we already observed that it is bounded in $H^1(\R^N)$, which together with the Gagliardo-Nirenberg inequalities \eqref{eq:GN} and the uniform boundedness of $Q(u_n)$ and of $d(u_n,\M)$ implies that $h(u_n)$ is uniformly bounded in $L^1(\R^N)$. Since, up to subsequences, $u_n\to u$ a.e., by Fatou's lemma it then follows that $h(u)\in L^1(\R^N)$ too. Together with the uniform boundedness in $H^1(\R^N)$, this also yields $\Phi(u_n)$ uniformly bounded in $L^1(\R^N)$ and $\Phi(u)\in L^1(\R^N)$. Since by definition $w_n$ is a linear combination of $u_n$ and $u$ with uniformly bounded coefficients, so is $w_n-w$. Hence, by the convexity of $\Phi$, \eqref{eq:equiPhi} and \eqref{eq:dilPhi}, it follows that $\Phi(w_n), \Phi(w_n-w)$ are uniformly bounded in $L^1(\R^N)$, and $\Phi(w)\in L^1(\R^N)$.  This shows that (iii)--(iv) in \cite[Theorem 2]{BL2} are satisfied, and (ii) follows by the inequality $|h(s)|\leq\tilde\Phi(s)$ for every $s\in\R$. 
\end{remark}
	\begin{lemma}
		\label{lem:mn=m}
		If
		\[
			\|u\|_2<\liminf_n\|u_n\|_2\,.
		\]
		then
		\[
		m(u_n-u)=m(u)+o(1).
		\]
	\end{lemma}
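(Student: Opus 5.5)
The plan is to argue by contradiction, using the splitting $u_n=u+v_n$ with $v_n:=u_n-u$ together with Lemma \ref{lem:m=max}, the Brezis--Lieb splitting of Remark \ref{rem:BL}, and the strict bound $Q<2\pi\log2$ of Proposition \ref{prop:Q<2pilog2}. Up to subsequences, set $\alpha:=\|u\|_2^2>0$ (by \eqref{eq:semicont}) and $\beta:=\lim_n\|v_n\|_2^2$. Since $u_n\rightharpoonup u$ in $L^2$ and $\|u_n\|_2=1$, we have $\alpha+\beta=1$, and the assumption $\|u\|_2<\liminf_n\|u_n\|_2$ gives $\beta>0$. Since $0\le m(w)\le\|w\|_2^2$, the sequence $m(v_n)$ is bounded, so along a subsequence $m(v_n)\to\mu$. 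Suppose by contradiction that $\mu\neq m(u)$ along some subsequence; the two cases $m(u)>\mu$ and $\mu>m(u)$ are symmetric.

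\emph{Step 1 (splitting of the deficit).} Weak convergence in $H^1$ gives $\|\nabla u_n\|_2^2=\|\nabla u\|_2^2+\|\nabla v_n\|_2^2+o(1)$. Remark \ref{rem:BL} with $w_n=u_n$ gives $\int h(u_n)=\int h(u)+\int h(v_n)+o(1)$. Substituting into the definition of $\delta$, we aim to get
\[
\delta[u_n]=\delta[u]+\delta[v_n]+\pi E+o(1),\qquad E:=(\alpha+\beta)\log(\alpha+\beta)-\alpha\log\alpha-\beta\log\beta .
\]
The correction comes from the normalization term $\pi\|w\|_2^2\log\|w\|_2^2$ in $\delta$, and has the favourable sign $E>0$.

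\emph{Step 2 (distances).} By Remark \ref{rem:d_assunta} and Lemma \ref{lem:m=max}, in the case $m(u)>\mu$ we have
\[
d(u_n,\M)^2=\alpha+\beta-m(u)+o(1)=d(u,\M)^2+\beta+o(1).
\]
Apply the definition of $Q$ to each piece. The function $u$ is not in $\M$: otherwise $d(u,\M)=0$, and then $d(u_n,\M)^2\to\beta$, so the inequality chain below still works with the term $d(u,\M)^2=0$. For $v_n$ we use $\delta[v_n]\ge Q\,d(v_n,\M)^2=Q(\|v_n\|_2^2-m(v_n))$. This yields
\[
\delta[u_n]\ge Q\,d(u,\M)^2+Q\beta-Q\mu+\pi E+o(1)=Q\,d(u_n,\M)^2-Q\mu+\pi E+o(1).
\]
Since $Q(u_n)\to Q$ and $d(u_n,\M)$ is bounded, this forces $\pi E\le Q\mu$.

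\emph{Step 3 (entropy gap; the key point).} Let $t:=\min\{\alpha,\beta\}/\max\{\alpha,\beta\}\in(0,1]$. Writing
\[
E=\alpha\log\Big(1+\tfrac\beta\alpha\Big)+\beta\log\Big(1+\tfrac\alpha\beta\Big),
\]
the term $\min\{\alpha,\beta\}\log(1+\max/\min)$ is at least $\min\{\alpha,\beta\}\log2$. The other term is $\max\{\alpha,\beta\}\log(1+t)$, and since $s\mapsto\log(1+s)/s$ is decreasing, $\log(1+t)\ge t\log2$ for $t\le1$, so this term is also at least $\min\{\alpha,\beta\}\log2$. Hence
\[
E\ge2\log2\,\min\{\alpha,\beta\}.
\]
On the other hand $\mu\le\beta$ (as $m(v_n)\le\|v_n\|_2^2$) and $\mu<m(u)\le\alpha$, so $\mu\le\min\{\alpha,\beta\}$. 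Because $\min\{\alpha,\beta\}>0$ and $Q<2\pi\log2$, we get the strict inequality
\[
Q\mu\le Q\min\{\alpha,\beta\}<2\pi\log2\,\min\{\alpha,\beta\}\le\pi E,
\]
which contradicts Step 2.

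The case $\mu>m(u)$ is identical with the roles of $u$ and $v_n$ exchanged. There $d(u_n,\M)^2=\alpha-m(u)+d(v_n,\M)^2+o(1)$, and we use $\delta[u]\ge Q\,d(u,\M)^2$ for $u$ (which holds trivially if $u\in\M$). This forces $\pi E\le Q\,m(u)$ with $m(u)<\mu\le\beta$ and $m(u)\le\alpha$, again impossible. Hence $m(v_n)=m(u)+o(1)$.

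I expect the delicate points to be: the careful bookkeeping of the logarithmic normalization terms in Step 1, where all the gain sits; and justifying the Brezis--Lieb splitting together with the uniform $L^1$ bounds on $h(v_n)$, for which Remark \ref{rem:BL} should suffice.
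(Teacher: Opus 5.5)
Your argument is correct, and it takes a genuinely different route from the paper's.

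\textbf{The paper's route.} The paper proves this lemma without invoking Proposition~\ref{prop:Q<2pilog2}. Assuming, say, $\lim_n m(u_n-u)>m(u)$, it rescales $u$ to $U_n=c_nu$ with $m(U_n)=m(u_n-u)$. It then uses that $y\mapsto F(x,y)/y$ is decreasing, together with K\"onig's elementary lemma on fractions, to build the competitor $\tilde u_n=c_nu+(u_n-u)$ with $\lim_n Q(\tilde u_n)<Q$. The entropy gap $F(x,x)/x=2\pi\log2$ and the bound $Q<2\pi\log2$ enter only afterwards, in the end of the proof of Theorem~\ref{thm:main}, once $m(u)$ and $m(u_n-u)$ are known to be balanced. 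There only $\delta\ge0$ is used on one of the two pieces.

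\textbf{Your route.} You apply the sharp bound $\delta[w]\ge Q\,d(w,\M)^2$ to both pieces, and this makes the imbalance harmless. Since $d(u_n,\M)^2=1-\max\{m(u),\mu\}+o(1)$ by Lemma~\ref{lem:m=max}, your chain gives $\pi E\le Q\min\{m(u),\mu\}\le Q\min\{\alpha,\beta\}$ in every case, including $\mu=m(u)$. This is incompatible with $E\ge 2\log2\,\min\{\alpha,\beta\}$ and $Q<2\pi\log2$. So what you actually prove is that the hypothesis $\|u\|_2<\liminf_n\|u_n\|_2$ never occurs. The lemma then holds vacuously, and your argument also replaces the end of the proof of Theorem~\ref{thm:main}.

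\textbf{Comparison.} Your approach is shorter. It avoids the rescaled competitor, K\"onig's fraction lemma, and the extra Brezis--Lieb verification for $c_nu+(u_n-u)$ with varying $c_n$. The paper's approach instead gives a structural statement about dichotomy (both pieces carry asymptotically the same projection mass). That statement does not depend on the explicit upper bound on $Q$, and it mirrors K\"onig's framework.

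One slip to fix: in the case $\mu>m(u)$, the correct identity is $d(u_n,\M)^2=\alpha+d(v_n,\M)^2+o(1)$, not $\alpha-m(u)+d(v_n,\M)^2+o(1)$; the latter would even force $\pi E\le 0$. With the correct identity, your chain yields $\pi E\le Q\,m(u)$, exactly as you state.
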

	\begin{proof}
		We argue by contradiction and assume first that (up to subsequences)
		\begin{equation}
			\label{mn>m}
			\lim_n m(u_n-u)>m(u)\,.
		\end{equation}
		 From the weak convergence of $u_n$ to $u$, we have
		\[
		\int_{\mathbb{R}^N}|\nabla u_n|^2\,dx
		=
		\int_{\mathbb{R}^N}|\nabla u|^2\,dx
		+
		\int_{\mathbb{R}^N}|\nabla(u_n-u)|^2\,dx
		+o(1),
		\]
		\[
		\int_{\mathbb{R}^N}u_n^2\,dx
		=
		\int_{\mathbb{R}^N}u^2\,dx
		+
		\int_{\mathbb{R}^N}(u_n-u)^2\,dx
		+o(1),
		\]
		and by Remark \ref{rem:BL} (with $w_n=u_n$) 
		\[
		\int_{\mathbb{R}^N}
		u_n^2\log u_n^2\,dx
		=
		\int_{\mathbb{R}^N}
		(u_n-u)^2\log (u_n-u)^2\,dx
		+
		\int_{\mathbb{R}^N}
		u^2\log u^2\,dx
		+o(1).
		\]
		Using these identities, for sufficiently large $n$ the numerator of $Q(u_n)$ can be rewritten
		in the form
		\[
		\delta[u_n]=\delta[u_n-u]+\delta[u] + F\left(\|u_n-u\|_2^2, \|u\|_2^2\right)+o(1)\,,
		\]
		where
		\[
		F(x,y):=\pi\left((x+y)\log(x+y)-x\log x-y\log y\right)\qquad\forall x,y>0\,.
		\]
		By Remark \ref{rem:d_assunta}, Lemma \ref{lem:m=max} and \eqref{mn>m}, for large $n$ the denominator becomes
		\[
		d(u_n,\mathcal M)^2
		=
		\|u_n-u\|_2^2
		+
		\|u\|_2^2
		-
		m(u_n-u)
		+o(1)\,,
		\]
		so that
		\[
		Q(u_n)=\frac{\delta[u_n-u]+\delta[u]+F\left(\|u_n-u\|_2^2,\|u\|_2^2\right)+o(1)}{d(u_n-u,\mathcal M)^2+\|u\|_2^2+o(1)}\,.
		\]
		Now, from
		\[
		Q(u_n)=Q+o(1)
		\]
		and
		\[
		\frac{\delta[u_n-u]}{d(u_n-u,\mathcal M)^2}\ge Q,
		\]
		it follows
		\[
		\frac{\delta[u]+F\left(\|u_n-u\|_2^2,\|u\|_2^2\right)}{\|u\|_2^2}\le Q+o(1)\,.
		\]
		Let now $c_n>1$ be such that
		\[
		m(c_n u)=c_n^2m(u)=m(u_n-u),
		\]
		and set
		\[
		U_n:=c_n u.
		\]
		Since
		\[
		\|U_n\|_2^2=c_n^2\|u\|_2^2>\|u\|_2^2
		\]
		and, for every fixed $x>0$, the function
		\[
		f(y):=\frac{F(x,y)}{y}
		\]
		is monotonically decreasing, one has
		\[
		\frac{F\left(\|u_n-u\|_2^2,\|u\|_2^2\right)}{\|u\|_2^2}>\frac{F\left(\|u_n-u\|_2^2,\|U_n\|_2^2\right)}{\|U_n\|_2^2}.
		\]
		By \cite[Lemma 2.4]{Konig_JEMS}, we then obtain
		\[
		\begin{split}
			Q+o(1)=\frac{\delta[u_n]}{d(u_n,\mathcal M)^2}=&\,\frac{\delta[u_n-u]+\delta[u]+F\left(\|u_n-u\|_2^2,\|u\|_2^2\right)+o(1)}{d(u_n-u,\mathcal M)^2+\|u\|_2^2+o(1)}\\
			>&\,
			\frac{\delta[u_n-u]+\delta[U_n]+F\left(\|u_n-u\|_2^2,\|U_n\|_2^2\right)+o(1)}{d(u_n-u,\mathcal M)^2+\|U_n\|_2^2+o(1)}\,.
		\end{split}
		\]
		We observe that, since by construction $(c_n)_n$ is uniformly bounded, the functions
		\[
		\tilde u_n:=U_n+u_n-u
		\]
		are uniformly bounded in $H^1(\R^N)$. Therefore, the last term in the previous inequality is equal to $Q(\tilde u_n)+o(1)$. Indeed, denoting as usual $h(s)=s^2\log s^2$, the identity
		\[
		\int_{\R^N}h(u_n-u)\,dx+\int_{\R^N}h(U_n)\,dx=\int_{\R^N}h(\tilde u_n)\,dx+o(1)
		\] 
		can be obtained by Remark \ref{rem:BL} with $w_n=cu+(u_n-u)$, where $\displaystyle c=\lim_n c_n$ (up to subsequences), together with the identities $\displaystyle \int_{\R^N}h(cu)\,dx=\int_{\R^N}h(U_n)\,dx+o(1)$ (that follows by the convergence of $c_n$ to $c$) and $\displaystyle \limsup_n\left|\int_{\R^N}h(c_nu+(u_n-u))-h(cu+(u_n-u))\,dx\right|=0$ (that follows by \eqref{eq:condBL} and $\displaystyle \int_{\R^N}\Phi(|c_n-c|u)\,dx\to0$ as $n\to\infty$, with $\Phi$ as in Remark \ref{rem:BL}). 
		Hence,
		\[
		\lim_n Q(\tilde u_n)<Q\,,
		\]
		which is the contradiction we sought. 
		
		The case $\displaystyle \lim_n m(u_n-u)<m(u)$ is handled analogously. Note again that, even though in \eqref{mn>m} we started extracting a subsequence, the result of the lemma holds for the whole sequence $(u_n)_n$, since from any subsequence we can extract a further subsequence for which the previous argument applies.
	\end{proof}
	\begin{proof}[End of the proof of Theorem \ref{thm:main}]
	Assume by contradiction that \eqref{eq:assurdo} holds. Then $u\not\equiv0$ and Lemma \ref{lem:mn=m} imply
	\[
	m(u_n-u)=m(u)+o(1)\,.
	\]
	Consider first the case (up to subsequences)
	\[
	\lim_n \|u_n-u\|_2^2>\|u\|_2^2\,.
	\]
	Arguing as in the proof of Lemma \ref{lem:mn=m}, we obtain
	\[
	Q+o(1)
	=
	\frac{
		\delta[u_n-u]+\delta[u]
		+
		F\left(\|u_n-u\|_2^2,\|u\|_2^2\right)
		+o(1)
	}{
		d(u_m-u,\mathcal M)^2+\|u\|_2^2+o(1)
	}\,,
	\]
	and since
	\[
	\frac{\delta[u_n-u]}{d(u_n-u,\mathcal M)^2}\ge Q,
	\]
	it follows (recalling also that $\delta[u]\geq0$)
	\[
	Q+o(1)
	\ge
	\frac{F\left(\|u_n-u\|_2^2,\|u\|_2^2\right)}{\|u\|_2^2}>\frac{F\left(\|u_n-u\|_2^2,\|U_n\|_2^2\right)}{\|U_n\|_2^2},
	\]
	where
	\[
	U_n:=c_n u
	\]
	and $c_n>1$ is such that
	\[
	\|U_n\|_2^2=c_n^2\|u\|_2^2=\|u_n-u\|_2^2.
	\]
	However,
	\[
	\frac{F(x,x)}{x}=\pi\frac{2x\log(2x)-x\log x-x\log x}{x}=2\pi\log 2 \qquad\forall x>0,
	\]
	and we thus obtain
	\begin{equation}
		\label{eq:Q>2log2}
		Q\ge 2\pi\log 2.
	\end{equation}
	Observe that, switching the roles of $u_n-u$ and $u$, the analogous argument leads to \eqref{eq:Q>2log2} also if one originally has
	\[
	\lim_{n\to\infty}
	\|u_n-u\|_2^2
	<
	\|u\|_2^2\,,
	\]
	and the same occurs if $\displaystyle \lim_n\|u_n-u\|_2^2=\|u\|_2^2$, without multiplying by any constant $c_n$ and simply passing to the limit in $\displaystyle F\left(\|u_n-u\|_2^2, \|u\|_2^2\right)$.
	Since \eqref{eq:Q>2log2} contradicts the a priori estimate reported in Section \ref{sec:Q<2log2}, 
	we conclude that \eqref{eq:assurdo} cannot hold. Hence $u_n$ converges strongly to $u$ in $L^2(\mathbb{R}^N)$ and $Q$ is attained by Proposition \ref{prop:convL2}.
	\end{proof}
	
	\section{Exponential decay and optimizers in $H^1(\R^N,d\gamma)$: proof of Theorem \ref{thm:main2}}
	\label{sec:main2}
	In this section we show that if $u\in H^1(\R^N)\setminus\M$ is an optimizer of $Q(\cdot)$, then $u$ decays exponentially fast at infinity, namely there exists $C>0$ such that
	\begin{equation}
		\label{eq:expdec}
		|u(x)|\leq C e^{-|x|}\qquad\forall x\in \R^N\,.
	\end{equation}
	Clearly, \eqref{eq:expdec} implies
	\begin{equation}
		\label{eq:momfin}
	\int_{\R^N}|x|^2u(x)^2\,dx<\infty\,.
	\end{equation}
	Hence, setting $v=u/\gamma$, since direct computations yield
	\begin{equation*}
		\int_{\R^N}u^2\,dx=\int_{\R^N}v^2d\gamma\,,\qquad\int_{\R^N}|x|^2u^2\,dx=\int_{\R^N}|x|^2v^2\,d\gamma\,,
	\end{equation*}
	\begin{equation}
	\begin{split}
		\int_{\R^N}|\nabla u|^2\,dx&\,=\int_{\R^N}|\nabla v|^2\,d\gamma+\pi^2\int_{\R^N}|x|^2v^2\,d\gamma-2\pi\int_{\R^N}\gamma^2 v\nabla v\cdot x\,dx\\
		&\,=\int_{\R^N}|\nabla v|^2\,d\gamma-\pi^2\int_{\R^N}|x|^2v^2\,d\gamma+N\pi\int_{\R^N}v^2\,d\gamma\,,
	\end{split}
	\end{equation}
	(where we used the identity $\displaystyle \nabla u(x)=\gamma(x)\nabla v(x)+v(x)\nabla \gamma(x)=\gamma(x)\left(\nabla v(x)-\pi v(x)x\right)$ and the divergence thoerem) and
	\[
	\begin{split}
		\int_{\R^N}u^2\log\frac{u^2}{\|u\|_{L^2(\R^N)}^2}\,dx&\,=\int_{\R^N}v^2\log\frac{\gamma^2v^2}{\|v\|_{L^2(\R^N,d\gamma)}^2}\,d\gamma\\
		&\,=\int_{\R^N}v^2\log\frac{v^2}{\|v\|_{L^2(\R^N,d\gamma)}^2}\,d\gamma-\pi\int_{\R^N}|x|^2v^2\,d\gamma\,,
	\end{split}
	\]
	then \eqref{eq:momfin} ensures that $v\in H^1(\R^N,d\gamma)$ and $Q_\gamma(v)=Q(u)=Q$, i.e. $Q$ is attained also in $H^1(\R^N,d\gamma)\setminus\M_\gamma$.
	
	Therefore, proving Theorem \ref{thm:main2} reduces to establishing \eqref{eq:expdec}. To this end, we begin with the next preliminary result.
	\begin{lemma}
		\label{lem:EL}
		Let $u\in H^1(\R^N)\setminus\M$ be such that $\|u\|_2=1$ and $Q(u)=Q$. Then there exist $h\in H^1(\R^N)$ and $C,c>0$ such that 
		\begin{equation}
			\label{eq:dech}
		|h(x)|\leq Ce^{-c|x|^2}\qquad\forall x\in\R^N
		\end{equation}
		for which it holds
		\begin{equation}
		\label{eq:EL}
		-\Delta u=\left(\pi\log u^2+N\pi+Q\right)u-Qh\qquad\text{in }\mathcal D'(\R^N)\,.
		\end{equation}
	\end{lemma}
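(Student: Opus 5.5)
The plan is to derive \eqref{eq:EL} as the first variation of a functional that is minimized at $u$. The key is to freeze the closest Gaussian so that the non-smooth quantity $m(\cdot)$ never has to be differentiated.

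\emph{Setup.} By Remark \ref{rem:d_assunta} there are $a\in\R$ and $b\in\R^N$ with
\[
d(u,\M)^2=\|u-w\|_2^2,\qquad w:=a\,e^{-\frac\pi2|x-b|^2},\quad a^2=m(u).
\]
I will take $h:=w$. It lies in $H^1(\R^N)$ and satisfies \eqref{eq:dech} with $C=|a|$, since $|x-b|^2\geq \frac12|x|^2-|b|^2$ gives $e^{-\frac\pi2|x-b|^2}\leq e^{\frac\pi2|b|^2}e^{-\frac\pi4|x|^2}$; so one can take $c=\pi/4$ after enlarging $C$.

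\emph{The comparison functional.} For $\varphi\in C_c^\infty(\R^N)$ and $t\in\R$ set
\[
J(t):=\delta[u+t\varphi]-Q\,\|u+t\varphi-w\|_2^2 .
\]
Since $w\in\M$ is fixed, $\|u+t\varphi-w\|_2^2\geq d(u+t\varphi,\M)^2$. The definition of $Q$ gives $\delta[v]\geq Q\,d(v,\M)^2$ for every $v$; this is trivial on $\M$, where both sides vanish. Hence $J(t)\geq0$ for all $t$. On the other hand $J(0)=\delta[u]-Q\,d(u,\M)^2=0$ because $Q(u)=Q$. So $t=0$ is an interior minimum of $J$, and $J'(0)=0$ as soon as $J$ is differentiable at $0$.

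\emph{Differentiability at $t=0$ (main obstacle).} The gradient, $L^2$ and distance terms are quadratic polynomials in $t$, so they are harmless. The delicate term is $t\mapsto\int h(u+t\varphi)$ with $h(s)=s^2\log s^2$, because $h$ is not $C^2$ at $0$. Only one derivative is needed, though, and $h\in C^1$ with $|h'(s)|=|2s(\log s^2+1)|\leq C(1+s^2)$. Let $K=\operatorname{supp}\varphi$ and $|t|\leq1$. By the mean value theorem,
\[
\frac{|h(u+t\varphi)-h(u)|}{|t|}\leq |\varphi|\,C\bigl(1+(|u|+|\varphi|)^2\bigr)\in L^1(K).
\]
Dominated convergence then gives $\frac{d}{dt}\big|_{t=0}\int h(u+t\varphi)=\int 2u(\log u^2+1)\varphi$. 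This also shows $u\log u^2\in L^1_{loc}$, so the right-hand side of \eqref{eq:EL} is a distribution. The normalization term is $\pi\|u+t\varphi\|_2^2\log\|u+t\varphi\|_2^2$. Since $\|u\|_2=1$, its derivative at $0$ is $\pi\,(\log1+1)\,2\int u\varphi=2\pi\int u\varphi$.

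\emph{Conclusion.} Collecting terms,
\[
0=\tfrac12J'(0)=\int\nabla u\cdot\nabla\varphi-\pi\int u(\log u^2+1)\varphi+\pi\int u\varphi-N\pi\int u\varphi-Q\int(u-w)\varphi .
\]
This holds for every $\varphi\in C_c^\infty(\R^N)$, and the two terms $\pm\pi\int u\varphi$ cancel. What remains is exactly
\[
-\Delta u=\bigl(\pi\log u^2+N\pi+Q\bigr)u-Qh\qquad\text{in }\mathcal D'(\R^N).
\]
The hypothesis $u\notin\M$ is not actually needed for the variational argument. It only guarantees that the optimizer is genuine, that is $d(u,\M)>0$.
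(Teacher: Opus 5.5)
There is a genuine gap at the step ``Hence $J(t)\geq 0$''. With $w$ frozen you have $\|u+t\varphi-w\|_2^2\geq d(u+t\varphi,\M)^2$. But this term enters $J$ with the coefficient $-Q<0$, so what actually follows is
\[
J(t)\leq \delta[u+t\varphi]-Q\,d(u+t\varphi,\M)^2 .
\]
Combined with $\delta[v]\geq Q\,d(v,\M)^2$, this gives no information on the sign of $J$.

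Freezing the Gaussian produces a smooth \emph{lower} bound for $F(v)=\delta[v]-Q\,d(v,\M)^2$ touching it at $u$. To transfer the minimum of $F$ at $u$ to a smooth function you would need an \emph{upper} bound, i.e.\ a smooth lower bound for $d(\cdot,\M)^2=\|\cdot\|_2^2-m(\cdot)$. Since $m$ is a supremum of convex quadratic functionals, such a bound can exist only where $m$ is differentiable. That holds, for instance, if the closest Gaussian is unique, but this is not known.

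The failure is real, not cosmetic. If the Euler--Lagrange equation holds with some $h\neq w$, then $J'(0)=2Q\int(w-h)\varphi\neq0$ for a suitable $\varphi$, so $J$ does take negative values. Hence your conclusion that $h$ can be taken to be a closest Gaussian is not justified.

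The missing idea is the treatment of the nonsmooth distance term, which is the heart of the paper's proof. Let $G(u)$ be the set of all closest Gaussians; it is compact, because $m(u)>0$ prevents their centers from escaping. One proves the Danskin-type formula
\[
\lim_{t\to0^+}\frac{d(u+t\varphi,\M)^2-d(u,\M)^2}{t}=2\inf_{g\in G(u)}\int_{\R^N}(u-g)\varphi\,dx ,
\]
which requires showing that closest Gaussians for $u+t\varphi$ accumulate on $G(u)$. Minimality of $F$ in the directions $\pm\varphi$ then gives only the two-sided bound
\[
\inf_{g\in G(u)}\int g\varphi\leq\Lambda(\varphi)\leq\sup_{g\in G(u)}\int g\varphi, \qquad \Lambda(\varphi)=\int u\varphi-\frac{\delta'[u]\varphi}{2Q}.
\]
A separation (Hahn--Banach) argument then yields $h$ in the closed convex hull of $G(u)$, not in $G(u)$ itself. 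The Gaussian bound \eqref{eq:dech} still holds: amplitudes and centers in $G(u)$ are uniformly bounded, so your estimate for a single Gaussian is uniform and passes to convex combinations and $L^2$-limits.

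Your differentiation of the entropy and normalization terms, and the cancellation of $\pm\pi\int u\varphi$, are correct and can be kept.
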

	\begin{proof}
	Since $Q(w)\geq Q(u)=Q$ for every $w\in H^1(\R^N)\setminus\M$, then $u$ is a minimizer for the functional $F(w):=\delta[w]-Qd(w,\M)^2$ on $H^1(\R^N)\setminus \M$. Hence, for every $\varphi\in C_c^\infty(\R^N)$ and $t>0$, by $F(u+t\varphi)\geq F(u)$ we obtain
	\begin{equation}
		\label{eq:diff1}
	\frac{\delta[u+t\varphi]-\delta[u]}{t}\geq Q\frac{d(u+t\varphi,\M)^2-d(u,\M)^2}{t}\,.
	\end{equation}
	Since $\varphi\in C_c^\infty(\R^N)$ and $u\log u^2\in L_{\text{\normalfont loc}}^1(\R^N)$, the entropy term is differentiable along the direction $\varphi$, so passing to the limit as $t\to0^+$ on the left hand side yields $\delta'[u]\varphi$, and the usual difference quotient argument gives
	\begin{equation}
		\label{eq:d'}
	\delta'[u]\varphi=2\int_{\R^N}\nabla u\cdot \nabla\varphi-\pi u\log u^2-N\pi u\,dx\,.
	\end{equation}
	Let us thus focus on the right hand side of  \eqref{eq:diff1}. Since $u\not\in\M$, by Remark \ref{rem:d_assunta} there exists $b\in\R^N$ such that 
	\[
	d(u,\M)^2=\|u\|_2^2-m(u)=1-\left(\int_{\R^N}u(x)e^{-\frac\pi2|x-b|^2}\,dx\right)^2\,,
	\]
	that is the distance between $u$  and $\M$ is the distance between $u$ and the function $I(b)\gamma(\cdot-b)$, with $\displaystyle I(b):=\int_{\R^N}u(x)e^{-\frac\pi2|x-b|^2}\,dx$.
	Let then $B(u)\subset \R^N$ be the set of all such $b\in\R^N$, and $\displaystyle G(u):=\left\{I(b)\gamma(\cdot-b)\,:\, b\in B(u)\right\}$. Since $m(u)>0$ and $\displaystyle e^{-\frac\pi2|\,\cdot\,-b|^2}\rightharpoonup0$ in $L^2(\R^N)$ as $|b|\to\infty$, the set $B(u)$ is compact, and so is $G(u)$ in $H^1(\R^N)$. We then show that
	\begin{equation}
		\label{eq:derd}
		\lim_{t\to0^+}\frac{d(u+t\varphi,\M)^2-d(u,\M)^2}{t}=2\inf_{g\in G(u)}\int(u-g)\varphi\,dx\,.
	\end{equation}
	Indeed, let $g\in G(u)$ be fixed. Then, for every $t>0$, 
	\[
	\begin{split}
	d(u+t\varphi,\M)^2\leq \|u+t\varphi-g\|_2^2&\,=\|u-g\|_2^2+2t\int_{\R^N}(u-g)\varphi\,dx+t^2\|\varphi\|_2^2\\
	&\,=d(u,\M)^2+2t\int_{\R^N}(u-g)\varphi\,dx+t^2\|\varphi\|_2^2\,,
	\end{split}
	\]
	so that 
	\[
	\limsup_{t\to0^+}\frac{d(u+t\varphi,\M)^2-d(u,\M)^2}{t}\leq2\int_{\R^N}(u-g)\varphi\,dx\,,
	\]
	and taking the infimum over $g\in G(u)$ 
	\begin{equation}
	\label{eq:limsup}
		\limsup_{t\to0^+}\frac{d(u+t\varphi,\M)^2-d(u,\M)^2}{t}\leq2\inf_{g\in G(u)}\int_{\R^N}(u-g)\varphi\,dx\,.
	\end{equation}
	Conversely, let $t_n\to0^+$ as $n\to\infty$ be such that 
	\[
	\lim_n\frac{d(u+t_n\varphi,\M)^2-d(u,\M)^2}{t_n}=\liminf_{t\to0^+}\frac{d(u+t\varphi,\M)^2-d(u,\M)^2}{t}\,.
	\]
	For every $n$, let $g_n\in G(u+t_n\varphi)$. Since $u+t_n\varphi$ is uniformly bounded in $L^2(\R^N)$, so is $g_n$, and recalling that $g_n(\cdot)=a_n\gamma(\cdot-b_n)$ for suitable $a_n\in\R$, $b_n\in\R^N$ shows that $a_n$ is uniformly bounded. Moreover, since $u+t_n\varphi\to u$ in $L^2(\R^N)$, it is easily seen that $m(u+t_n\varphi)\to m(u)$ as $n\to\infty$. Since $\displaystyle m(u+t_n\varphi)=\left(\int_{\R^N}(u+t_n\varphi)\gamma(x-b_n)\,dx\right)^2$ and $m(u)>0$, as $n\to\infty$ the sequence $(b_n)_n$ must be bounded in $\R^N$ (if not, we would have $g_n\rightharpoonup0$ in $L^2(\R^N)$, in turn yielding $m(u+t_n\varphi)\to0$). Hence, up to subsequences $a_n\to a_0$ and $b_n\to b_0$ as $n\to\infty$, so that $g_n\to g_0$ in $L^2(\R^N)$ with $g_0\in G(u)$. As a consequence, 
	\[
	\begin{split}
	d(u+t_n\varphi)^2=\|u+t_n\varphi-g_n\|_2^2&\,=\|u-g_n\|_2^2+2t_n\int_{\R^N}(u-g_n)\varphi\,dx+t_n^2\|\varphi\|_2^2\\
	&\,\geq d(u,\M)^2+2t_n\int_{\R^N}(u-g_n)\varphi\,dx+t_n^2\|\varphi\|_2^2
	\end{split}
	\]
	leading to 
	\[
	\begin{split}
\liminf_{t\to0^+}\frac{d(u+t\varphi,\M)^2-d(u,\M)^2}{t}=&\,\lim_n\frac{d(u+t_n\varphi,\M)^2-d(u,\M)^2}{t_n}\\
\geq&\, 2\int_{\R^N}(u-g_0)\varphi\,dx\geq 2\inf_{g\in G(u)}\int_{\R^N}(u-g)\varphi\,dx\,.
\end{split}
	\]
	Combining with \eqref{eq:limsup} gives \eqref{eq:derd}, that together with \eqref{eq:diff1} yields
	\[
	\delta'[u]\varphi\geq 2Q\inf_{g\in G(u)}\int_{\R^N}(u-g)\varphi\,dx\,.
	\]
	Applying the above inequality to $-\varphi$ and coupling with the previous one we obtain
	\begin{equation}
		\label{eq:boundd'}
		2Q\inf_{g\in G(u)}\int_{\R^N}(u-g)\varphi\,dx\leq \delta'[u]\varphi\leq 2Q\sup_{g\in G(u)}\int_{\R^N}(u-g)\varphi\,dx
	\end{equation}
	for every $\varphi\in C_c^\infty(\R^N)$. Since 
	\[
	\begin{split}
	&\inf_{g\in G(u)}\int_{\R^N}(u-g)\varphi\,dx=\int_{\R^N}u\varphi\,dx-\sup_{g\in G(u)}\int_{\R^N}g\varphi\,dx\\
	&\sup_{g\in G(u)}\int_{\R^N}(u-g)\varphi\,dx=\int_{\R^N}u\varphi\,dx-\inf_{g\in G(u)}\int_{\R^N}g\varphi\,dx\,,
	\end{split}
	\]
	\eqref{eq:boundd'} can be rewritten as 
	\[
	\inf_{g\in G(u)}\int_{\R^N}g\varphi\,dx\leq \Lambda(\varphi)\leq \sup_{g\in G(u)}\int_{\R^N}g\varphi\,dx
	\]
	with $\displaystyle \Lambda(\varphi):=\int_{\R^N}u\varphi\,dx-\frac{\delta'[u]\varphi}{2Q}$. Note that, since
	\[
	g\in G(u)\mapsto\int_{\R^N}g\varphi\,dx
	\]
	is a linear and continuous map on $L^2(\R^N)$, if we denote by $\mathcal{C}$ the closed convex envelope of $G(u)$ in $L^2(\R^N)$, the above chain of inequalities extends to
	\[
	\inf_{g\in\mathcal C}\int_{\R^N}g\varphi\,dx\leq \Lambda(\varphi)\leq \sup_{g\in \mathcal C}\int_{\R^N}g\varphi\,dx\,.
	\]
	Now, for every $g\in\mathcal{C}$ we set $\displaystyle L_g(\varphi):=\int_{\R^N}h\varphi\,dx$ and take $\mathcal{K}:=\{L_g\,:\,h\in\mathcal{C}\}\subset\mathcal{D}'(\R^N)$, so that the previous formula reads
	\begin{equation}
	\label{eq:L}
	\inf_{g\in\mathcal{C}}L_g(\varphi)\leq \Lambda(\varphi)\leq\sup_{g\in\mathcal{C}}L_g(\varphi).
	\end{equation}
Since $\mathcal{C}$ is closed and convex in $L^2(\R^N)$, so is $\mathcal{K}$ with respect to the weak$^*$-topology on $\mathcal{D}'(\R^N)$ by compactness of $G(u)$ and Krein’s theorem. Hence, $\Lambda\in\mathcal{K}$. Indeed, if it were $\Lambda\not\in\mathcal{K}$, then by the Hahn-Banach Separation Theorem applied in $\mathcal{D}'(\R^N)$ to the sets $\mathcal{K}$ and $\left\{\Lambda\right\}$ we would reach a contradiction with \eqref{eq:L} . Therefore, there exists $h\in\mathcal{C}$ such that
	\[
	\Lambda(\varphi)=L_h(\varphi)=\int_{\R^N}h\varphi\,dx\qquad\forall \varphi\in C_c^\infty(\R^N)\,.
	\]
	By the definition of $\Lambda$ and \eqref{eq:d'}, this proves \eqref{eq:EL}, and \eqref{eq:dech} follows by the fact that $h$ belongs to the convex envelope of a compact set of gaussian functions.
 	\end{proof}
 	
 	\begin{remark}
 		\label{rem:reg} Lemma \ref{lem:EL} ensures that any $u\in\ H^1(\R^N)$ with $\|u\|_2=1$ and $Q(u)=Q$ satisfies
 		\begin{equation}
 		\label{eq:lim=0}
 		\lim_{|x|\to\infty}u(x)=0\,.
 		\end{equation}
 		Indeed, set
 		\[
 		f(s):=\left(\pi\log s^2+N\pi+Q\right)s\,,
 		\]
 		so that \eqref{eq:EL} becomes
 		\[
 		-\Delta u = f(u)-Qh\qquad\text{in }\mathcal D'(\R^N)\,.
 		\]
 		We show first that $u\in L^\infty(\R^N)$. Note that for every $\theta\in(0,1)$ there exists $C_\theta>0$ such that $|f(s)|\leq C_\theta\left(|s|^{1-\theta}+|s|^{1+\theta}\right)$ for every $s\in\R$. Hence, if $N=1,2$, since $u\in H^1(\R^N)$ implies $u\in L^p(\R^N)$ for every $p\geq2$, the same is true for $f(u)-Qh$, in turn yielding $u\in W^{2,p}(\R^N)$ for every $p\geq2$, and thus $u\in L^\infty(\R^N)$ by Sobolev embeddings.  If $N\geq 3$, then $u\in L^{p}(\R^N)$ for every $p\in [2,2^*]$, and thus
 		\[
 		\sup_{y\in\R^N}\|u\|_{L^p(B_1(y))}<\infty\,.
 		\]
 		Let then $y\in\R^N$ and fix $p_0\in(2,2^*]$ such that
 		\begin{equation}
 		\label{eq:condp}
 		2<2p_0<N+2\,.
 		\end{equation}
 		By the above pointwise estimate on $f(s)$ with $\theta=2/N$ and the immersion of $L^{\frac{p_0}{1-2/N}}(B_1(y))$ in $L^{\frac{p_0}{1+2/N}}(B_1(y))$, we have $f(u)-Qh\in L^{\frac{p_0}{1+2/N}}(B_1(y))$, yielding $u\in W^{2,\frac{p_0}{1+2/N}}(B_1(y))$ and in turn $u\in L^{p_1}(B_1(y))$, with
 		\[
 		p_1=\frac{Np_0}{N+2-2p_0}\,,
 		\]
 		Observe that $p_1>p_0$ by \eqref{eq:condp}, and since $u\in L^{p_1}(B_1(y))$ the above argument gives immediately $\displaystyle u\in W^{2,\frac{p_1}{1+2/N}}(B_1(y))$. If $\displaystyle p_1/(1+2/N)>N/2$, the immersion of $\displaystyle W^{2,\frac{p_1}{1+2/N}}(B_1(y))$ in $L^\infty(B_1(y))$ and the fact that all these estimates do not depend on the center $y$ of the ball $B_1(y)$ entail $u\in L^\infty(\R^N)$. Conversely, if $\displaystyle p_1/(1+2/N)<N/2$, then \eqref{eq:condp} remains true with $p_1$ in place of $p_0$, and we can repeat the above bootstrap argument (note that one can always avoid the case $\displaystyle p_1/(1+2/N)=N/2$ by slightly modifying the original $p_0$ if necessary). In general, if after $k$ iterations we obtain an exponent $p_k$ such that $p_k/(1+2/N)<N/2$, then \eqref{eq:condp} holds with $p_k$ in place of $p_0$ and a further iteration can be done to obtain the new exponent
 		\[
 		p_{k+1}=\frac{N p_k}{N+2-2p_k}\,.
 		\] 
 		Since
 		\[
 		p_{k+1}-p_k=p_k\left(\frac {2p_k-2}{N+2-2p_k}\right)> \frac4{N-2}\qquad\forall k\,,
 		\]
 		after finitely many steps one arrives at $\displaystyle p_{k+1}/(1+2/N)>N/2$, which is enough to conclude $u\in L^\infty(\R^N)$.
 		Since $u\in L^\infty(\R^N)$, so is $f(u)$, and thus $u\in W^{2,q}(\R^N)$ for every $q\geq2$. Hence, $u\in C^{1,\alpha}(\R^N)$ for every $\alpha\in(0,1)$. In particular, $u$ is uniformly continuous on $\R^N$. This, together with the fact that $u\in L^2(\R^N)$, proves \eqref{eq:lim=0}. 
 		\end{remark}
		\begin{proof}[Proof of Theorem \ref{thm:main2}]
		Combining Kato's inequality
		\[
		-\Delta|u|\leq\text{\normalfont sgn}(u)(-\Delta u)
		\]
		with \eqref{eq:EL} yields
		\[
		-\Delta|u|\leq\left(\pi\log u^2+N\pi+Q\right)\text{\normalfont sgn}(u)u-\text{\normalfont sgn}(u)Qh\leq\left(\pi\log u^2+N\pi+Q\right)|u|+Q|h|\,. 
		\]
		Fix $\mu>1$ and note by Remark \ref{rem:reg} there exists $R>0$ such that 
		\[
		\pi\log u^2+N\pi+Q<-\mu\qquad\text{on }\R^N\setminus B_R\,,
		\]
		so that by \eqref{eq:dech}, in the sense of distributions, 
		\begin{equation}
		\label{eq:kato_u}
		-\Delta|u|+\mu|u|\leq Q|h|\leq QCe^{-c|x|^2}\qquad\forall x\in \R^N\setminus B_R\,.
		\end{equation}
		Consider then the function
		\[
		v(x):=Ae^{-(|x|-R)}\qquad\forall x\in \R^N\setminus B_R\,,
		\]
		where $A>0$ is chosen to ensure
		\begin{equation}
			\label{eq:v>|u|}
		v\geq|u|\qquad\text{on }\partial B_R
		\end{equation}
		(which is possible again by Remark \ref{rem:reg}). Since 
		\[
		\Delta v =\left(1-\frac{N-1}{|x|}\right)v\,,
		\]
		we obtain
		\begin{equation}
		\label{eq:diff_v}
		-\Delta v+\mu v = \left(\mu-1+\frac{N-1}{|x|}\right)v\geq(\mu-1)v\geq QCe^{-c|x|^2}\qquad\forall x\in\R^N\setminus B_R
		\end{equation}
		up to possibly further increasing $R$, since $e^{-c|x|^2}/v(x)$ tends to zero for every $c>0$ as $|x|\to\infty$.  
		
		Now, by Remark \ref{rem:reg}, for every $\varepsilon>0$ there exists $R_\varepsilon>0$ such that 
		\begin{equation}
			\label{eq:|u|<eps}
		|u(x)|\leq\varepsilon\qquad\forall x\in \R^N\setminus B_{R_\varepsilon}\,.
		\end{equation}
		Let then $\rho>R_\varepsilon$ and set $w_\varepsilon:=|u|-v-\varepsilon$. By \eqref{eq:kato_u} and \eqref{eq:diff_v}
		\[
		-\Delta w_\varepsilon+\mu w_\varepsilon\leq-\mu\varepsilon<0\qquad\text{on }B_\rho\setminus B_R\,,
		\]
		whereas by \eqref{eq:v>|u|} and \eqref{eq:|u|<eps}
		\[
		w_\varepsilon<0\qquad\text{on }\partial B_R\cup\partial B_\rho\,.
		\]
		Hence, the weak maximum principle yields $w_\varepsilon\leq0$ on $B_\rho\setminus B_R$, i.e.
		\[
		|u|\leq v+\varepsilon\qquad\text{on }B_\rho\setminus B_R
		\]
		(the estimate holds everywhere as both $u$ and $v$ are continuous functions). Since $\rho\to\infty$ as $\varepsilon\to0$, it then follows
		\[
		|u|\leq v\qquad\text{on }\R^N\setminus B_R\,,
		\] 
		that together with the continuity of $u$ implies \eqref{eq:expdec}, and we conclude.
		\end{proof}

\end{document}